\documentclass{amsart}
\usepackage{amsmath}
\usepackage{expl3}
\usepackage{amssymb}
\usepackage{amsthm}
\usepackage[british]{babel}
\usepackage{amscd,amsbsy}
\usepackage{mathtools}
\usepackage{xcolor}
\usepackage{tikz-cd}
\usetikzlibrary{arrows,matrix}
\usepackage{float}
\usepackage{mathtools}
\usepackage{enumitem}
\usepackage{mathrsfs}  
\usepackage{stmaryrd}
\usepackage[normalem]{ulem}
\usepackage[all]{xy}

\usepackage{hyperref}
\usepackage{color}
\definecolor{darkred}{rgb}{.5,0,0}
\definecolor{darkgreen}{rgb}{0,.5,0}
\definecolor{darkblue}{rgb}{0,0,.5}
\hypersetup{
    pdfencoding=unicode,
    colorlinks=true,
    linkcolor=darkred,
    citecolor=darkgreen,
    urlcolor=darkblue,
    bookmarks=false,
    pdftoolbar=true,
    pdfmenubar=true,
    pdffitwindow=true,
    pdfstartview={FitH},
    pdfcreator={},
    pdfproducer={},
    pdfnewwindow=true,
    pdftitle={Cm solutions of semialgebraic equations on curves},
    pdfauthor={Edward Bierstone, Jean-Baptiste Campesato},
}

\DeclareSymbolFont{calletters}{OMS}{cmsy}{m}{n}
\DeclareSymbolFontAlphabet{\mathcal}{calletters}

\newtheorem{theorem}{Theorem}[section]
\newtheorem{lemma}[theorem]{Lemma}
\newtheorem{proposition}[theorem]{Proposition}
\newtheorem{corollary}[theorem]{Corollary}
\newtheorem{claim}[theorem]{Claim}

\theoremstyle{definition}

\newtheorem{definitions}[theorem]{Definitions}
\newtheorem{example}[theorem]{Example}

\newtheorem{definitions and remarks}[theorem]{Definitions and Remarks}

\theoremstyle{remark}

\newtheorem{remark}[theorem]{Remark}
\newtheorem{remarks}[theorem]{Remarks}

\numberwithin{equation}{section}

\renewcommand\o[1]{\underset{\overset{#1}{}}{o}}
\renewcommand\O[1]{\underset{\overset{#1}{}}{O}}

\newcommand{\bdry}{\mathrm{bdry}\,}

\newcommand{\Ker}{\mathrm{Ker}\,}

\newcommand{\al}{{\alpha}}
\newcommand{\be}{{\beta}}
\newcommand{\de}{{\delta}}
\newcommand{\ep}{{\varepsilon}}

\newcommand{\ga}{{\gamma}}
\newcommand{\Ga}{{\Gamma}}

\newcommand{\La}{{\Lambda}}

\newcommand{\p}{{\partial}}
\newcommand{\s}{{\sigma}}

\newcommand{\vp}{{\varphi}}

\newcommand{\IN}{{\mathbb N}}

\newcommand{\IR}{{\mathbb R}}

\newcommand{\IZ}{{\mathbb Z}}

\newcommand{\cC}{{\mathcal C}}

\newcommand{\cH}{{\mathcal H}}

\newcommand{\cK}{{\mathcal K}}

\newcommand{\cP}{{\mathcal P}}

\newcommand{\ta}{{\widetilde a}}
\newcommand{\tb}{{\widetilde b}}

\newcommand{\tf}{{\widetilde f}}
\newcommand{\tg}{{\widetilde g}}

\newcommand{\tA}{{\widetilde A}}

\newcommand{\tB}{{\widetilde B}}

\newcommand{\tG}{{\widetilde G}}

\newcommand{\RN}[1]{%
  \textup{\uppercase\expandafter{\romannumeral#1}}%
}

\begin{document}
\title[$\cC^m$ solutions of semialgebraic equations on curves]
{$\cC^m$ solutions of semialgebraic equations on curves}
\author[E.~Bierstone]{Edward Bierstone}
\address[E.~Bierstone]{University of Toronto, Department of Mathematics, 40 St. George Street, Toronto, ON, Canada M5S 2E4}
\email{bierston@math.utoronto.ca}
\author[J.-B.~Campesato]{Jean-Baptiste Campesato}
\address[J.-B.~Campesato]{Universit\'e Angers, CNRS, LAREMA, SFR MATHSTIC, F-49000 Angers, France}
\email{jb.campesato@univ-angers.fr}
\thanks{Research supported by NSERC Discovery Grant RGPIN-2017-06537 (Bierstone) and 
ANR project NewMIRAGE ANR-23-CE40-0002 (Campesato).}
\date{5th October 2026}

\subjclass[2020]{Primary 14P10; Secondary 03C64, 14P20, 26B05, 26B12, 35G05, 58A20}
\keywords{semialgebraic function, o-minimal structure, $\cC^m$ Whitney extension, semialgebraic linear equations, jets}

\begin{abstract}
Consider a system of equations $A(x)\cdot F(x) = B(x)$ on a subset $X$ of $\IR^n$, where
$A(x)$ and $B(x)$ are matrix- and vector-valued semialgebraic functions on $X$, and the unknown $F(x)$ is
a field of vector-valued $m$-jets. 
We assume there is a solution which is the field of Taylor polynomials of order $m$ on $X$
of a $\cC^m$ vector-valued function $f$ on $\IR^n$, and ask whether we can find a $\cC^m$ semialgebraic solution $f$.
Our main result is a positive answer in the case $\dim X = 1$. The methods are based on an article of 
Fefferman and Luli for $X \subset \IR^2$, and a secondary goal is to show that their approach applies in a much simpler way in the case
$\dim X =1$, or in the case of the semialgebraic Whitney extension problem in $\IR^2$. Our results hold more generally
for functions definable in an o-minimal expansion of $\IR$.
\end{abstract}

\maketitle

\setcounter{tocdepth}{1}
\tableofcontents

\section{Introduction}\label{sec:intro}
\subsection{Overview}\label{subsec:overview}
This paper concerns the solutions of a system of semialgebraic linear equations $A(x)\cdot F(x) = B(x)$ on a semialgebraic subset $X$ of $\IR^n$.
Here $A(x)$ and $B(x)$ are matrix- and vector-valued semialgebraic functions on $X$ (respectively), while the unknown $F(x)$ is
a vector-valued function on $X$, or, more generally, a field of vector-valued
$m$-jets on $X$ (an $m$-\emph{jet} is a Taylor polynomial of order $m$).

We assume that the system of equations has a solution which is the field of Taylor polynomials of order $m$ on $X$
of a $\cC^m$ vector-valued function $f$ on $\IR^n$, and ask whether we can find such a solution (in particular, with the same
order of differentiability $m$), where, in addition, $f$ is semialgebraic.  
An important special case of the problem is the $\cC^m$
semialgebraic Whitney extension problem. Precise formulations of these problems are given in \S\ref{subsec:main}.

Our main new result is a positive answer to the preceding question in the case $\dim X = 1$. See Theorems
\ref{thm:mainjet} and \ref{thm:main}, as well as
Corollary \ref{cor:mainB}, for our most general assertions. Positive answers
were previously given by Fefferman and Luli in the case that $X$ is a semialgebraic subset of the plane $\IR^2$
\cite{FL} and, by the authors together with P. Milman \cite{BCM}, for general semialgebraic $X\subset \IR^n$ provided 
that one allows a certain controlled loss of differentiability,

The semialgebraic Whitney extension problem is to find conditions under which a
semialgebraic function $g:X\to\IR$ has a semialgebraic $\cC^m$ extension to $\IR^n$. A solution for one-dimensional 
$X\subset \IR^n$ or for $X\subset \IR^2$ follows from Theorem \ref{thm:mainjet}. The $\cC^m$ semialgebraic
Whitney extension problem was originally proposed by Bierstone and Milman in \cite{Z}, and the more general problem concerning
a system of equations by Fefferman and Koll\'ar \cite{FK}. Wies{\l}aw Paw{\l}ucki asked us about the one-dimensional case at a
conference in his honour in June, 2025.

Our methods are closely based on those of \cite{FL}, and a secondary goal of the article, which was part of our original
motivation, is to show that the approach of Fefferman and Luli applies in a much simpler way in the case that $\dim X =1$,
or in the case of the semialgebraic Whitney extension problem in $\IR^2$. In particular, we do not need Fefferman's main
$\cC^m$ extension theorem \cite{F} or the ideas of a $\cC^m$ bundle and Glaeser refinement to prove our main theorems. 
We will, nevertheless, show that the problem on solutions of equations treated in Theorem \ref{thm:main} is equivalent to the formulation
of \cite{FL} in terms of sections of bundles; see Corollary \ref{cor:mainC}. We hope that our paper
may help a reader better understand the methods of \cite{FL}, and lead to progress on the question for general $X\subset\IR^n$.

It is convenient to formulate our main problem in terms of $\cC^m$ Whitney fields on $X$, rather than in terms of $\cC^m$
functions on $\IR^n$ (see \S\S\ref{subsec:whitney},\,\ref{subsec:main}). The formulations are equivalent 
if $X$ is closed, because of Whitney's
classical extension theorem \cite{W} and its semialgebraic (or definable) counterpart due to Kurdyka and Paw{\l}ucki \cite{KP}.

For simplicity of language, we work with semialgebraic sets and functions, but all of our results hold more generally for sets
and functions that are definable in an o-minimal expansion of $\IR$ (see Section \ref{sec:omin}).

\subsection{Notation}\label{subsec:notn}
The notation that we use is chosen to be close to that of \cite{FL}, although there are some differences. $\cC^m(\IR^n,\IR^D)$
denotes the space of $\IR^D$-valued functions on $\IR^n$ with continuous derivatives up to order $m$. We write
$\cC^m(\IR^n) := \cC^m(\IR^n,\IR)$.

Let $\cP$ denote the vector space of real-valued polynomials $\sum_{|\al|\leq m} c_\al T^\al/\al!$
of degree $\leq m$ in $n$ variables $T= (T_1,\ldots,T_n)$. Here $\al$ is a multiindex $\al = (\al_1,\ldots,\al_n) \in \IN^n$,
$T^\al := T_1^{\al_1}\cdots T_n^{\al_n}$, $\al ! := \al_1!\cdots\al_n!$
and $|\al| := \al_1 +\cdots +\al_n$. The vector space $\cP$ has the structure of a ring, 
with the product $P\odot Q$ of $P,Q \in \cP$ given by truncating terms of degree $> m$
in $PQ$ as a product of polynomials. The product $P\odot Q$ is commutative and associative. Note that $\cP^D$ is a real vector space of dimension 
$d = D\binom{m+n}{m}$, and $\cP^D$ has the structure of a $\cP$-module with the componentwise product.

An $m$-\emph{jet} at $a\in \IR^n$ denotes an element $P\in \cP$, where $P$ can be regarded as a Taylor polynomial $\sum c_\al (x-a)^\al/\al!$ 
of order $m$ centred at $a\in \IR^n$, via the substitution $T = x-a$. A vector-valued $m$-jet at $a$ means an element of $\cP^D$,
for some $D$. An $m$-jet $\sum_{|\al|\leq m} c_\al T^\al/\al! \in \cP^D$ will be identified 
with the vector of coefficients $(c_\al)_{|\al|\leq m}\in \IR^d$ (i.e., coefficients of the $T^\al/\al!$).

Given $f \in \cC^m(\IR^n,\IR^D)$ and $a\in \IR^n$, the (vector-valued) $m$-jet $J_a(f)$ denotes
$\sum_{|\al|\leq m} \p^\al f(a) T^\al/\al! \in \cP^D$, where
$\p^\al := \p^{|\al|}/\p x^\al$. If $f\in \cC^m(\IR^n)$ and $g\in \cC^m(\IR^n,\IR^D)$,
then $J_a(fg) = J_a(f)\odot J_a(g)$. All products $\odot$ appearing in the paper involve two jets at the same base point $a$.

\subsection{Recall on Whitney's extension theorem}\label{subsec:whitney}
Consider a field of $m$-jets $F = (F_x)_{x\in X}$ on a subset $X\subset \IR^n$; say, $F_x = \sum_{|\al|\leq m}f_\al(x) T^\al/\al!$.

First suppose $X$ is an open subset of $\IR^n$. Then $F$ is the Taylor field (field of Taylor polynomials) of a function $f\in \cC^m(X)$
provided that, for all $a\in X$ and all $\al$ such that $|\al|\leq m$,
\begin{equation}\label{eq:Taylor}
f_\al(x) - \sum_{|\be|\leq m-|\al|} f_{\al + \be}(a)\frac{(x-a)^\be}{\be!} = o(|x-a|^{m-|\al|})\quad \text{as } x \to a \text{ in } X
\end{equation}
(converse of Taylor's theorem). For a closed set $X\subset \IR^n$, it is not, in general, true that a field of $m$-jets $F$ on $X$
is the Taylor field on $X$ of a function $f\in \cC^m(U)$ on a neighbourhood $U$ of $X$ provided that \eqref{eq:Taylor} holds
for all $a\in X$ and $|\al|\leq m$ (counterexample of Whitney in \cite[\S4]{W}).

Whitney's classical extension theorem \cite{W} asserts that, if $X$ is closed, then $F$ is the Taylor field on $X$ of a $\cC^m$
function on $\IR^n$ provided that, for all $a\in X$ and all $|\al|\leq m$,
\begin{equation}\label{eq:Whitney}
f_\al(x) - \sum_{|\be|\leq m-|\al|} f_{\al + \be}(y)\frac{(x-y)^\be}{\be!} = o(|x-y|^{m-|\al|})\quad \text{as } x, y \to a \text{ in } X.
\end{equation}

A field of $m$-jets $F$ satisfying \eqref{eq:Whitney} is called a \emph{Whitney field} of \emph{order} $m$ on $X$.
(We do not require that $X$ be closed in this definition, but Whitney's extension theorem holds only when $X$ is closed). Note that all coefficients $f_\alpha(x)$ of a Whitney field $F$ are continuous on $X$.

We say that a field of $m$-jets (or vector-valued $m$-jets) $F = (F_x)_{x\in X}$ on a subset $X\subset \IR^n$ is \emph{semialgebraic}
if all coefficients $f_\al(x)$ are semialgebraic functions. (In this case, it follows that $X$ is semialgebraic.)

\subsection{Semialgebraic equations and main results}\label{subsec:main}
Let $X$ denote a subset of $\IR^n$. We first consider a system of equations
\begin{equation}\label{eq:eqjet}
A_x\odot F_x = B_x, \quad x\in X,
\end{equation}
where $B: X \to \cP^k$ is a given semialgebraic $\IR^k$-valued field of $m$-jets on $X$, $A$ is a given $k\times D$ matrix
of semialgebraic $m$-jet fields on $X$, and the unknown $F$ is a field of $\IR^D$-valued $m$-jets on $X$. (Thus $X$ is semialgebraic,
by the Tarski-Seidenberg theorem.)

In the notation of \S\S\ref{subsec:notn},\,\ref{subsec:whitney}, $F = (F_x)_{x\in X}$, where $F_x = (F_{1,x},\ldots,F_{D,x})$ and $F_{j,x} 
= \sum_{|\al|\leq m} f_{j,\al}(x) T^\al/\al! \in \cP$, $j=1,\ldots,D$. The matrix $A = (A_{ij})$ has entries $A_{ij} = (A_{ij,x})_{x\in X}$, $i=1,\ldots,k$, $j=1,\ldots,D$,
which are semialgebraic $m$-jet fields on $X$; i.e., $A_{ij,x} = \sum_{|\al|\leq m} a_{ij,\al}(x) T^\al/\al! \in \cP$, where every $a_{ij,\al}$ is a 
semialgebraic function on $X$. Likewise, $B_x = (B_{1,x},\ldots,B_{k,x})$, where each $B_{i,x} = \sum b_{i,\al}(x) T^\al/\al!$, and every $b_{i,\al}$
is a semialgebraic function on $X$. So the equation \eqref{eq:eqjet} can be rewritten as
\begin{equation*}
\sum_{j=1}^D A_{ij,x} \odot F_{j,x} = B_{i,x},\quad i=1,\ldots,k,\quad x\in X.
\end{equation*}

\begin{theorem}\label{thm:mainjet}
Consider a system of equations \eqref{eq:eqjet}, where $X$ is a semialgebraic set and $\dim X =1$. Assume that \eqref{eq:eqjet}
has a solution $F$, where $F$ is a (vector-valued) $\cC^m$ Whitney field on $X$. Then \eqref{eq:eqjet}
has a solution $G$, where $G$ is a (vector-valued) $\cC^m$ semialgebraic Whitney field on $X$.
\end{theorem}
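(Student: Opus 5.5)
The plan is to reduce to a local problem near finitely many bad points, solve it there by a semialgebraic selection in jet space, and then glue the local solutions.

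\emph{Step 1: stratify and reduce to local germs.} Take a semialgebraic stratification of $X$ into finitely many points and open arcs $\Gamma$, each a $\cC^\infty$ (even Nash) curve. Refine it so that on each arc the coefficients of $A$ and $B$ are real-analytic and the rank of the linear map $P\mapsto A_x\odot P$ on $\cP^D$ is constant. Since $\dim X=1$, the Whitney condition \eqref{eq:Whitney} is local in two senses. It concerns pairs $x,y$ tending to a common point $a$, and near a point of an open arc it reduces to the usual Taylor compatibility along a smooth curve. The only delicate places are therefore the finitely many $0$-dimensional strata $a$, together with the ends of arcs at infinity, which are harmless because the condition is only local. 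By a semialgebraic partition of unity on $X$ (semialgebraic $\cC^m$ functions on $\IR^n$ that are $\equiv1$ near a given point $a$ and supported in a small ball), with jets multiplied via $\odot$, it suffices to construct semialgebraic Whitney solutions on $X\cap B(a,r)$ for each such $a$, and on compact pieces of arcs away from these points. Multiplying by jets of scalar cutoffs preserves the equation \eqref{eq:eqjet}, because the equation is $\cP$-linear: $A\odot(\theta G)=\theta\odot(A\odot G)=\theta\odot B$, and $\sum\theta_i=1$.

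\emph{Step 2: construct the solution away from the bad points.} On an arc where the rank is constant, the solution set $H_x=\{P: A_x\odot P=B_x\}$ is an affine bundle with semialgebraic analytic coefficients. A solution along the arc then amounts to a field of jets $G_x\in H_x$ that is Taylor-compatible along the curve. Parametrize the arc by a Nash map $\gamma:(0,1)\to\Gamma$. Jets of $\IR^n$-functions at points of $\Gamma$ split into tangential and normal parts. On the interior of an arc one can take a semialgebraic local section of $H$ and correct it. Alternatively, and more uniformly, treat everything through Step 3 applied at every point, using definable choice.

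\emph{Step 3: the endpoint problem, which I expect to be the main obstacle.} Fix a bad point $a$ and one arc $\Gamma$ ending at $a$, and parametrize $\Gamma$ by $t\in(0,\epsilon)$, $\gamma(t)\to a$. The given $F$ shows that the following set is nonempty: the set $\Sigma$ of pairs $(P_a,(G_t)_t)$ with $P_a\in H_a$, $G_t\in H_{\gamma(t)}$, satisfying the Whitney estimates with some modulus. The difficulty is that $\Sigma$ is not first-order (semialgebraic) because of the $o(\cdot)$ and the dependence on whole functions. Following Fefferman--Luli, I would replace it by a semialgebraic condition. For $\delta>0$ and a constant $C$, consider the set $S_\delta$ of finite configurations: jets $Q\in H_a$ and $Q_t\in H_{\gamma(t)}$ at $t$ and at a bounded number of nearby points, satisfying the quantitative Whitney inequalities $|\partial^\al(Q_s-Q_t)(\gamma(s))|\le \delta\, |\gamma(s)-\gamma(t)|^{m-|\al|}$. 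This set is semialgebraic in all its variables. Definable choice then produces a semialgebraic selection $t\mapsto G_t\in H_{\gamma(t)}$, together with $Q\in H_a$, that keeps the compatibility inequalities with a semialgebraic function $\delta(t)\to0$. The key point is that on a curve, Whitney regularity of a field of jets can be checked through a finite-type (Markov-type, two-point) condition. Moreover, for $\dim X=1$, the curve selection lemma and the monotonicity theorem allow one to replace ``there exists a $\cC^m$ solution'' by ``there exist solutions of the pointwise inequalities with error $o(1)$.'' The existence of $F$ guarantees that these inequalities are satisfiable for each $t$ and each $\delta$. Monotonicity then makes the minimal achievable $\delta$ a semialgebraic function of $t$ tending to $0$. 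Upgrading this selection to an actual $\cC^m$ Whitney field is where I expect the real work. I would proceed as follows. Take a Puiseux-type expansion in $t$ of the selected jets. Use the Whitney condition of $F$ to show that the selection can be taken so that its coefficients have the correct orders of vanishing: $G_t - Q$, re-expanded at $\gamma(t)$, is $o(|\gamma(t)-a|^{m-|\al|})$ in each derivative of order $\al$. Then check \eqref{eq:Whitney} for pairs on the same arc by integrating along $\gamma$, using that semialgebraic functions of one variable have derivatives with controlled growth.

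\emph{Step 4: gluing.} Several arcs meet at $a$. Choose the common jet $Q\in H_a$ first, via definable choice over the semialgebraic set of $Q$ that admit compatible selections on every arc; this set contains $F_a$, so it is nonempty. Then run Step 3 arc by arc with this $Q$. Pairs $x,y$ on different arcs near $a$ satisfy $|x-y|\gtrsim$ neither distance to $a$ in general, but one can estimate them via $Q$ using the triangle inequality, provided $|x-a|+|y-a|\le C|x-y|$. The remaining case of tangent arcs is handled by including, in the semialgebraic condition of Step 3, the two-point inequalities between arcs. Finally, glue with the partition of unity from Step 1.
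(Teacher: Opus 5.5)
\textbf{There is a genuine gap.} It sits where Step~2 says ``take a semialgebraic local section of $H$ and correct it,'' and everything afterwards inherits it.

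Along an arc the Whitney condition is differential, not pointwise. On a graph $y=\psi(x)$ it forces the chain-rule relations \eqref{eq:chain} among the coefficients of $G$. So a definable choice $x\mapsto G_x\in H_x$ is in general not a Whitney field. If you instead redefine the tangential coefficients as derivatives of selected lower-order ones, nothing guarantees that $A_x\odot G_x=B_x$ still holds.

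Apart from the partition-of-unity gluing, your argument never uses that $\{P: A_x\odot P=0\}$ is a $\cP$-submodule, and without that the arc step is false. In Example~\ref{ex:main}, on $(0,1)$ the solution set is a constant-rank affine bundle with Nash data and a $\cC^1$ solution exists, yet there is no semialgebraic solution on any subinterval.

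The missing idea is the Fefferman--Luli reduction, Lemma~\ref{lem:FL5.1}.
\begin{itemize}
\item By the chain rule and Leibniz, the equations take the form $\sum_\ell \p_x^\ell\La_{i\ell}(F)=B_i$, where $\La_{i\ell}$ involves only the transversal coefficients $f_{j,(0,\be)}(x,\psi(x))$.
\item Apply the same identity to $J(\chi)\odot F$ with $\chi=(x-x_0)^s/s!$. This is legitimate precisely because $J_{p_0}(\chi)\odot(F_{p_0}-\s(x_0))$ stays in the kernel, which is where multiplicativity is used.
\item The result is a triangular system, hence an equivalent \emph{pointwise} linear system $\La_{i\ell}(F)=\ga_{i\ell}$ with Nash right-hand sides.
\item One then selects the transversal coefficients semialgebraically. $G$ is taken to be the Taylor field of the Nash map $\sum_\be f_{j,\be}(x)(y-\psi(x))^\be/\be!$, which is automatically Whitney along the arc and solves the system by the lemma.
\end{itemize}
For Theorem~\ref{thm:mainjet}, the paper first rewrites the $\odot$-system as a multiplicative system \eqref{eq:eq} and applies Theorem~\ref{thm:main}.

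Step~3 has a second, independent problem. Definable choice selects over a single parameter. Inequalities coupling $G_s$ and $G_t$ for $s\neq t$ on the same arc therefore cannot be enforced by choosing finite configurations fibrewise: the jets chosen in the configuration attached to $t$ are unrelated to those chosen at other parameters. An optimal $\de(t)$ for configurations says nothing about an actual field, and ``integrating along $\gamma$'' runs into the same differential issue as above.

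The paper never imposes two-parameter conditions. Near a singular point $a$ it proceeds as follows.
\begin{itemize}
\item It subtracts the polynomial field $J_x(F_a)$ to make $F$ flat. So no choice of $Q$ is needed; your set of admissible $Q$ is not first order as stated.
\item It splits the branches into tangent clusters, which are pairwise $1$-regularly situated. This part matches your Step~4.
\item For one cluster over the $x_1$-axis, it imposes only conditions at a single $x$: the reduced system \eqref{eq:1st}; the Whitney estimates \eqref{eq:2nd} on $y$-jets between $(x,\psi_s(x))$ and $(x,\psi_t(x))$ over the \emph{same} $x$; and the flatness condition \eqref{eq:3rd}.
\item Curve selection then gives semialgebraic solutions, and $G$ is built from \eqref{eq:fs}.
\item Claims~\ref{claim:first}--\ref{claim:third}, using $\p_x^k\eta=O(x^{-k}\eta)$ for Nash $\eta$ (see \eqref{eq:puis}), show that these one-parameter conditions imply the full Whitney condition for arbitrary pairs of points in the cluster.
\end{itemize}
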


Note that, for every $x\in X$, the set of solutions of the associated homogeneous system of equations $A_x\odot F_x = 0$
(in other words, $\Ker A_x$) forms a $\cP$-submodule of $\cP^D$, since $\odot$ is commutative.

We will prove a more general result than Theorem \ref{thm:mainjet}, concerning a system of equations
\begin{equation}\label{eq:eq}
A(x)\cdot F_x = B(x),\quad x\in X,
\end{equation}
where $B: X\to \IR^k$ is a given $\IR^k$-valued semialgebraic function on $X$, $A$ is a given $k\times d$ matrix of semialgebraic
functions on $X$, the unknown $F=(F_x)_{x\in X}$ is a field of $\IR^D$-valued $m$-jets on $X$, and $d = D\binom{m+n}{m}$.

We will say that a system of equations \eqref{eq:eq} is \emph{multiplicative} 
if the linear subspace $\Ker A(x) \subset \cP^D$ is a $\cP$-submodule of $\cP^D$, for
all $x\in X$.

Using the notation above, each $F_x = (F_{1,x},\ldots,F_{D,x})$ can be identified with the vector of coefficients $(f_{j,\al}(x))$,
where $j=1,\ldots,D$ and $\al \in \IN^n$, $|\al| \leq m$. Therefore, writing $B = (b_1,\ldots,b_k)$ and
$A = (a_{i,(j,\al)})$, where $i$ and $(j,\al)$ are used as the row and column indices (respectively), the equation \eqref{eq:eq}
can be rewritten as
\begin{equation}\label{eq:eq1}
\sum_{j=1}^D \sum_{|\al|\leq m} a_{i,(j,\al)}(x) f_{j,\al}(x) = b_i(x),\quad \quad i=1,\ldots,k,\quad x\in X.
\end{equation}

\begin{theorem}\label{thm:main}
Consider a multiplicative system of equations \eqref{eq:eq}, where $X\subset\IR^n$ is a semialgebraic set and $\dim X =1$.
Assume that \eqref{eq:eq} has a solution $F$, where $F$ is a (vector-valued) $\cC^m$ Whitney field on $X$. Then
\eqref{eq:eq} has a solution $G$, where $G$ is a (vector-valued) $\cC^m$ semialgebraic Whitney field on $X$.
\end{theorem}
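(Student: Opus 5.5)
We reduce to a local problem at finitely many points and then glue. Since $\dim X=1$, we first fix a semialgebraic stratification of $\overline X$ compatible with $X$ and with all the data $A$, $B$. Concretely, we choose a finite set $\Sigma\subset\overline{X}$ with two properties. First, $\overline X\setminus\Sigma$ is a finite disjoint union of semialgebraic arcs $\Gamma_\nu$, each the image of a semialgebraic $\cC^\infty$ embedding $\gamma_\nu$ of an open interval that extends continuously to the closed interval. Second, on each $\Gamma_\nu\cap X$ the entries of $A$ and $B$ are analytic (Nash), $\operatorname{rank}A(x)$ is constant, and the kernels $\Ker A(x)$ form a Nash subbundle of $\cP^D$. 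On a single open arc the problem is easy. The solution set $\{F_x : A(x)\cdot F_x=B(x)\}$ is an affine subbundle whose translation part is $\Ker A(x)$, a $\cP$-submodule. We take the minimal-norm solution $G^0_x$, which is Nash. We then correct it, using the multiplicativity of the system, so that it becomes a genuine $\cC^m$ field along the arc. Let $\tilde g$ be the $\cC^m$ function of the zeroth-order parts transported along $\gamma_\nu$. Its Taylor field $J(\tilde g)$ differs from $G^0$ by a section $H$ that must lie in $\Ker A$. Because $\Ker A$ is a $\cP$-module, the Whitney condition along a smooth arc reduces to a finite-dimensional ODE-type compatibility, which we solve semialgebraically. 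Away from $\Sigma$ this gives a semialgebraic Whitney field $G$ solving the equation on each arc.

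The real work is at the finitely many points $a\in\Sigma\cap\overline X$. Here we follow Fefferman--Luli, but only in a cuspidal neighbourhood of a single point. For each $a$ and each arc $\Gamma_\nu$ adjacent to $a$, we parametrize $\Gamma_\nu$ near $a$ by $t\mapsto\gamma_\nu(t)$ with $|\gamma_\nu(t)-a|\sim t$ (Puiseux). We then write any solution as $G_x = P^a + H_x$, where $P^a$ is a fixed jet at $a$ (transported to $x$) and $H$ is a correction. The given $\cC^m$ Whitney field $F$ shows that the following semialgebraic problem has a solution at each scale: find $H_x$ solving the equation on $\Gamma_\nu$ near $a$ with the flatness estimates $|\partial^\alpha(\ldots)|=o(|x-a|^{m-|\alpha|})$. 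The key step is to replace $F$ by a semialgebraic $G$ in this local problem. We use definable choice on the semialgebraic set of tuples $(P^a, x\mapsto\text{jet at }x)$ satisfying the equations together with quantitative Whitney-type inequalities with constant $\epsilon$ on $|x-a|<\delta$. These inequalities are first-order expressible once we replace $o(\cdot)$ by $\le \epsilon|\cdot|$. Existence for every $\epsilon$ follows from $F$, and we then choose semialgebraically in $(x,\epsilon)$. To pass from ``$\le\epsilon$ for some $\delta(\epsilon)$'' to a single field with $o(\cdot)$, we use the one-variable structure. The semialgebraic function $\epsilon\mapsto\delta(\epsilon)$ is monotone near $0$, so we select along the curve $\epsilon=\epsilon(|x-a|)$ via its inverse, which tends to $0$. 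The Whitney condition for pairs $x,y$ on the same arc at comparable distances then follows from the estimates. For pairs at very different distances, or on different arcs, it follows by comparing both to $P^a$ through the triangle inequality for Taylor remainders. For this to work, the choice of $P^a$ must be made consistently, which is the first selection step: pick $P^a$ in the (semialgebraic, nonempty since $J_a$ of an extension of $F$ works when $a\in X$, and a limit of $F$ otherwise) set of admissible jets.

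Finally we glue the local solutions near each $a\in\Sigma$ and the arc solutions by a semialgebraic partition of unity subordinate to a cover of $X$ by neighbourhoods of the points of $\Sigma$ and the arcs. Gluing is legitimate because the system is multiplicative. If $G^1,G^2$ both solve the equation, then $\theta\odot G^1+(1-\theta)\odot G^2 = G^2+\theta\odot(G^1-G^2)$ still solves it, since $G^1-G^2\in\Ker A$ and this kernel is a $\cP$-module. Whitney regularity is preserved because $\theta$ is $\cC^\infty$ semialgebraic. The step we expect to be the main obstacle is the local selection at a point $a\in\Sigma$. The issue is to make the quantitative Whitney estimates first-order expressible uniformly, and then to upgrade ``for every $\epsilon$'' to a single semialgebraic field with genuine $o(|x-y|^{m-|\alpha|})$ control as $x,y\to a$, including pairs on different branches. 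We would handle this via the monotonicity and Puiseux arguments above, treating the branches separately and then comparing each to the common jet $P^a$.
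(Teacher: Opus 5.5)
\textbf{There is a genuine gap.} Your global architecture matches the paper: open arcs plus finitely many points, glued by a partition of unity using multiplicativity. But you never explain how multiplicativity lets you avoid solving differential equations, and that is the whole difficulty.

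\textbf{On the open arcs.} On an open arc, the Whitney condition couples jets at different points through derivatives along the arc (for $n=1$, $f_{\alpha+1}=f_\alpha'$). So a priori you face a differential system, and Example~\ref{ex:main} shows such systems generally have no semialgebraic solution. Your sentence ``because $\Ker A$ is a $\cP$-module, the Whitney condition reduces to an ODE-type compatibility, which we solve semialgebraically'' is exactly what has to be proved.

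The preceding construction also fails. If $\tilde g$ is built from the zeroth-order parts of the minimal-norm solution $G^0$, then $J(\tilde g)-G^0$ is in general not in $\Ker A$. Take $n=m=1$, $D=2$, and $\Ker A(x)=\cP\cdot(1,c(x))$ with $c'\neq 0$. The Whitney condition then forces the zeroth-order part of any solution to be a specific expression in the derivatives of $\sigma$ and $c$, which generally differs from that of $G^0$.

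You also never use the given $\cC^m$ solution $F$ on the arcs, although it is needed: if $\Ker A(x)=0$, the solution is forced and must already be Whitney.

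The missing ingredient is Lemma~\ref{lem:FL5.1}. Apply the equation to $J(\chi)\odot(F-\sigma)\in\Ker A$ with $\chi=(x-x_0)^s/s!$, and argue recursively from $s=m$ down to $0$. This shows that the transverse combinations $\Lambda_{i\ell}(F)$ equal Nash functions $\gamma_{i\ell}$ obtained by \emph{differentiating} the data, not integrating it. The system thus becomes pointwise algebraic conditions on the transverse coefficients $f_{j,(0,\beta)}$ alone. These are solvable by semialgebraic selection, with existence coming from $F$. The $x$-derivatives are then supplied automatically by the Nash function $\sum_\beta f_{j,\beta}(x)(y-\psi(x))^\beta/\beta!$.

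\textbf{At the points of $\Sigma$.} Your selection step fails for a related reason. A Whitney inequality with constant $\epsilon$ relates jets at two different points. It is therefore a condition on the whole field, not a first-order condition on the fibre over one $x$. Choosing definably in $(x,\epsilon)$ picks each jet independently and gives no control of the remainders between points; the paper makes exactly this remark at the start of Section~\ref{sec:main}.

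Comparing $p,q$ through the common jet $P^a$ also fails on tangent branches, e.g.\ $y=0$ and $y=x^2$. There $|p-q|\sim x^2\ll |p-a|$, so errors $o(|p-a|^{m-|\alpha|})$ are not $o(|p-q|^{m-|\alpha|})$.

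The paper handles this in two steps. Clusters with distinct tangents are glued by {\L}ojasiewicz's theorem after reducing to $F_a=0$. Within a cluster, Lemma~\ref{lem:FL5.1} is applied on each branch, so the only Whitney conditions imposed are \eqref{eq:2nd}, between different branches over the \emph{same} $x$, and the flatness condition \eqref{eq:3rd}. These are pointwise in $x$ and can be realized by curve selection. Claims~\ref{claim:first}--\ref{claim:third} then recover the full Whitney condition from Puiseux bounds on derivatives.

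\textbf{Minor point.} At $a\in\overline X\setminus X$, the field $F$ need not have a limit (e.g.\ $X=(0,1)$ and $F$ the jet field of $\sin(1/x)$), so $P^a$ cannot be chosen that way. No local work is needed there, however, since Whitney conditions are only required at points of $X$.
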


In this theorem and related results throughout the article, we write $\dim X =1$ (and ignore isolated points of $X$ in the
proofs) rather than $\dim X \leq 1$ because isolated points can always be accommodated in a trivial way using a 
$\cC^m$ semialgebraic partition of unity.

Theorem \ref{thm:mainjet} is a special case of Theorem \ref{thm:main}. Indeed, it follows from the formula for the product
$P\odot Q$ of $m$-jets $P,Q$, that a system of equations \eqref{eq:eqjet} can be rewritten as $A'(x)\cdot F_x = B'(x)$ in the form
of \eqref{eq:eq}, where $B': X \to \IR^\ell$, $\ell := k \binom{m+n}{m}$, and $A'(x) = (a_{ij}'(x))$, $i=1,\ldots,\ell$, $j=1,\ldots,d$,
and where each entry $a_{ij}'$ is either $a_{ih,\al}/\al!$ for some coefficient $a_{ih,\al}$ of an entry $A_{ih}$ of $A$, or zero.

Theorem \ref{thm:main} is not, in general, true without the multiplicative hypothesis on the system of equations \eqref{eq:eq}.

\begin{example}\label{ex:main}
Let $X = [0,1] \subset \IR$. Take $n=m=k=D=1$, $A(x)=\begin{pmatrix}0&1\end{pmatrix}$, and $B(x)= 1/(1+x^2)$.
Then \eqref{eq:eq} says that $f'(x)=1/(1+x^2)$, so that \eqref{eq:eq} admits $\arctan$ as a solution, but there is no $\cC^1$ semialgebraic solution.
\end{example}

There are also more general relative versions of the theorems above (and the corollaries below), where $F$ is
assumed to be semialgebraic on a subset $Y$ of $X$, and we find a semialgebraic solution $G$ such that $G|_Y = F|_Y$.
The relative versions are, in fact, corollaries of the results here, and the implications absolute $\implies$ relative statement do
not depend on $\dim X = 1$; see Section \ref{sec:plane}.

The following is a reformulation of Theorem \ref{thm:main} in terms of functions (as in the results of \cite{FL}) rather
than Whitney fields.

\begin{corollary}\label{cor:mainB}
With the hypotheses of Theorem \ref{thm:main}, assume that there is a $\cC^m$ (vector-valued) function $f$ on $\IR^n$
such that the Taylor field of $f$ on $X$ is a solution of \eqref{eq:eq}. Then there is a $\cC^m$ (vector-valued) semialgebraic function
$g$ on $\IR^n$ whose Taylor field on $X$ is a solution of \eqref{eq:eq}.
\end{corollary}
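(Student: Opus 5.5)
The plan is to reduce Corollary \ref{cor:mainB} to Theorem \ref{thm:main} by passing to the closure of $X$, and then to return to functions on $\IR^n$ using the semialgebraic Whitney extension theorem of Kurdyka and Paw{\l}ucki \cite{KP}. The only subtlety is that $X$ need not be closed.

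\emph{Step 1: pass to $\overline X$.} Let $\overline X$ be the closure of $X$. It is semialgebraic and still has $\dim\overline X = 1$. Put $F_x := J_x(f)$ for $x\in\overline X$. By Taylor's theorem applied to $f\in\cC^m(\IR^n,\IR^D)$, $F$ is a $\cC^m$ Whitney field on $\overline X$. The system \eqref{eq:eq} is only defined on $X$. I would extend it to $\overline X$ by adjoining the trivial system on $\overline X\setminus X$, namely $A(x)=0$, $B(x)=0$ with the same number $k$ of rows. This extended system is semialgebraic because $\overline X\setminus X$ is. It is multiplicative, since $\Ker 0 = \cP^D$ is a $\cP$-submodule. $F$ solves it on $\overline X$: on $X$ by hypothesis, and trivially on $\overline X\setminus X$.

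\emph{Step 2: apply the theorem and extend.} Theorem \ref{thm:main} applied on $\overline X$ gives a $\cC^m$ semialgebraic Whitney field $G$ on $\overline X$ solving the extended system. In particular $G$ solves \eqref{eq:eq} on $X$. Since $\overline X$ is closed, the semialgebraic version of Whitney's extension theorem \cite{KP} gives a semialgebraic $g\in\cC^m(\IR^n,\IR^D)$ with $J_x(g)=G_x$ for all $x\in\overline X$. Applying it componentwise handles the vector-valued case. Hence the Taylor field of $g$ on $X$ solves \eqref{eq:eq}.

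\emph{Main obstacle.} The only point that needs care is that the extended data on $\overline X$ still satisfy the hypotheses of Theorem \ref{thm:main}, which asks only that $A$ and $B$ be semialgebraic functions, with no continuity required. So extending by zero is harmless. If one preferred not to alter the system, an alternative would be to restrict $G$ to $X$: a Whitney field on $X$ extends uniquely to $\overline X$ by continuity of its coefficients and uniform Whitney estimates. But the extension-by-zero route avoids this.
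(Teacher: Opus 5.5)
Your proof is correct and is essentially the paper's own argument (Remarks~\ref{rem:closure}): extend $A$ and $B$ by zero to $\overline X$, apply Theorem~\ref{thm:main} on the closed one-dimensional set $\overline X$, and then invoke the Kurdyka--Paw{\l}ucki semialgebraic Whitney extension theorem. Drop your closing aside, though, because it is false: a $\cC^m$ Whitney field on a non-closed $X$ need not extend to $\overline X$ (for example, the Taylor field of $1/x$ on $(0,1)$, which is even semialgebraic), and this is exactly why one must solve the extended system on $\overline X$ rather than the original system on $X$.
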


\begin{remarks}\label{rem:closure}
First, consider a closed subset $X\subset \IR^n$. 
Recall that Whitney's extension theorem asserts that $F$ is a $\cC^m$ (vector-valued) Whitney field on $X$
if and only if there is a $\cC^m$ (vector-valued) function $f$ on $\IR^n$ such that $F_x = J_x (f)$, $x\in X$ \cite{W}.
Kurdyka and Paw{\l}ucki's semialgebaic version of Whitney's extension theorem asserts that $F$ is a $\cC^m$ 
semialgebraic (vector-valued) Whitney field on $X$
if and only if there is a $\cC^m$ semialgebraic function $f$ on $\IR^n$ such that $F_x = J_x(f)$, $x\in X$ \cite{KP}.

Now consider an arbitrary subset $X\subset \IR^n$, and a system of equations \eqref{eq:eq} on $X$. Extend the system
of equations to the closure $\overline{X}$ by setting $A(x)=0$ and $B(x)=0$, for $x\in\overline X\setminus X$. Let $f$ denote
a $\cC^m$ (vector-valued) function on $\IR^n$. Then the Taylor field of $f$ on $X$ is a solution of \eqref{eq:eq} if and only if
the Taylor field of $f$ on $\overline{X}$ is a solution of the extended equation. The Taylor field of $f$ on $X$ is semialgebraic
if and only if the Taylor field on $\overline{X}$ is semialgebraic. Note that the system of equations on $X$ is multiplicative
if and only if the extended system on $\overline{X}$ is multiplicative. 
\end{remarks}

Therefore, in the case that $X$ is closed, Corollary \ref{cor:mainB} is equivalent to Theorem \ref{thm:main}, and
Corollary \ref{cor:mainB} follows for $X$ as in Theorem \ref{thm:main}, in general. On the other hand, our proof of Theorem \ref{thm:main}
in the case that $X$ is not closed involves particular properties of one-dimensional semialgebraic $X$; e.g., $X$ is
locally closed. 

Note that the assumption $\dim X =1$ is used to prove Theorem \ref{thm:main}, but all implications among the theorems
and corollaries in this section are proved for general semialgebraic $X$.

The following two results are consequences of Theorem \ref{thm:mainjet} (in the formulation of Corollary \ref{cor:mainB}). 
The first deals with equations of the form considered
in \cite{FK} and \cite{FL}, and the second with the semialgebraic Whitney extension problem.

\begin{corollary}\label{cor:mainM}
Let $X\subset\IR^n$ denote a semialgebraic set of dimension one. Let $A$ be a $k\times D$ matrix of semialgebraic functions on $X$, and let $B:X\to\IR^k$ be semialgebraic. Assume that the system of equations
\begin{equation}\label{eq:mainM}
A(x)\cdot f(x) = B(x),\quad x\in X,
\end{equation} 
has a solution $f\in\cC^m(\IR^n,\IR^D)$. Then \eqref{eq:mainM} has a $\cC^m$ semialgebraic solution.
\end{corollary}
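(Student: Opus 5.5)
We reduce Corollary \ref{cor:mainM} to Theorem \ref{thm:mainjet}, in the form of Corollary \ref{cor:mainB} (which the paper derives for general semialgebraic $X$). The idea is to regard the pointwise system $A(x)\cdot f(x) = B(x)$ as a system of jet equations \eqref{eq:eqjet} that only involves the order-zero coefficients.

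\textbf{Step 1: encode the system as jet equations.} For each entry $a_{ij}$ of $A$, let $A_{ij,x} := a_{ij}(x)\cdot T^{0}$ be the constant $m$-jet with value $a_{ij}(x)$. Likewise set $B_{i,x} := b_i(x)\cdot T^0$. For any field $F$ of $\IR^D$-valued $m$-jets on $X$, we have
\begin{equation*}
\Bigl(\sum_j A_{ij,x}\odot F_{j,x}\Bigr) = \sum_j a_{ij}(x)F_{j,x} \in \cP.
\end{equation*}
So the jet system $A_x\odot F_x = B_x$ says two things:
\begin{itemize}
\item $\sum_j a_{ij}(x) f_{j,0}(x) = b_i(x)$;
\item $\sum_j a_{ij}(x) f_{j,\al}(x) = 0$ for $0<|\al|\le m$.
\end{itemize}
This extra condition on higher coefficients is too strong; the Taylor field of a solution $f$ of \eqref{eq:mainM} need not satisfy it. So instead we use the more general multiplicative form \eqref{eq:eq} of Theorem \ref{thm:main}.

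\textbf{Step 2: use Theorem \ref{thm:main} / Corollary \ref{cor:mainB}.} Define $A'(x)$ to be the $k\times d$ matrix acting on $F_x=(f_{j,\al}(x))$ by
\begin{equation*}
(A'(x)\cdot F_x)_i = \sum_j a_{ij}(x) f_{j,0}(x),
\end{equation*}
with zero entries in all columns $(j,\al)$ with $|\al|>0$. Set $B'=B$. Then $\Ker A'(x)$ consists of the $F_x$ whose constant terms $F_x(0)\in\IR^D$ lie in $\Ker A(x)$.

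We check that this subspace is a $\cP$-submodule of $\cP^D$. For $P\in\cP$, the constant term of $P\odot F_x$ is $P(0)F_x(0)$, which lies in $\Ker A(x)$ whenever $F_x(0)$ does. Hence the system $A'(x)\cdot F_x=B'(x)$ is multiplicative, and it is semialgebraic because $A$ and $B$ are.

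The Taylor field of the given $f\in\cC^m(\IR^n,\IR^D)$ solves this system, since its constant terms are $f(x)$. Corollary \ref{cor:mainB} therefore yields a $\cC^m$ semialgebraic $g$ on $\IR^n$ whose Taylor field solves the system, i.e.\ $A(x)\cdot g(x)=B(x)$ on $X$.

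\textbf{Main obstacle.} The only real point is choosing the right encoding. Constant jet coefficients in \eqref{eq:eqjet} impose spurious conditions on the derivatives, so we must pass to \eqref{eq:eq} and verify multiplicativity of the zeroth-order kernel; that verification is immediate. The non-closed case is already handled inside Corollary \ref{cor:mainB} via Remarks \ref{rem:closure}.
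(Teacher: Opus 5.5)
Your proof is correct and is essentially the paper's. The paper also reduces to Corollary \ref{cor:mainB}, but via the jet system $A(x)T^\ga\odot F_x = B(x)T^\ga$ with $|\ga|=m$: since $T^\ga\odot P = P(0)T^\ga$, this keeps only the constant term, which fixes the problem you found in Step~1 and makes multiplicativity automatic. The paper also remarks that an analogous proof works with Theorem \ref{thm:main} directly, and that is exactly your Step~2, where your check that $\Ker A'(x)=\{F: F(0)\in\Ker A(x)\}$ is a $\cP$-submodule (via $(P\odot F)(0)=P(0)F(0)$) is right.
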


\begin{proof}
Take $\ga\in\IN^n$ such that $|\ga| = m$.  Then, for every $P\in \cP$,
$T^\ga\odot P = P(0)T^\ga$. Define matrix- and vector-valued $m$-jet fields $A^\ga$ and $B^\ga$ on $X$ by
$A^\ga_x := A(x)T^\ga$ and $B^\ga_x := B(x)T^\ga$, $x\in X$. Then $A^\ga_x \odot P = B^\ga_x$ if and only if $A(x)\cdot P(0) = B(x)$.

Consider the system of equations
\begin{equation}\label{eq:eqB}
A^\ga_x \odot F_x = B^\ga_x,\quad x\in X,
\end{equation}
of the form \eqref{eq:eqjet}.
Since $J_x(f)(0) = f(x)$, the Taylor field $F_x := J_x(f)$ is a solution of \eqref{eq:eqB}. By Corollary \ref{cor:mainB},
there is a $\cC^m$ semialgebraic function $f \in \cC^m(\IR^n,\IR^D)$ such that $J_x(f)$ is a solution of \eqref{eq:eqB}, $x\in X$;
therefore, $A(x)\cdot f(x) = B(x)$ on $X$.

An analogous proof can be given using Theorem \ref{thm:main} directly.
\end{proof}

In view of Fefferman's general $\cC^m$ extension theorem \cite{F}, it makes sense to formulate a semialgebraic version
of the $\cC^m$ Whitney extension problem as follows.

\medskip\noindent
\emph{Semialgebraic Whitney extension problem.} Let $g: X\to \IR$ denote a semialgebraic function on a subset
$X$ of $\IR^n$, Suppose $g$ extends to a $\cC^m$ function $f$ on $\IR^n$. Can we find an extension of $g$ to a
$\cC^m$ semialgebraic function on $\IR^n$?

\begin{corollary}\label{cor:mainW1}
Consider a semialgebraic function $g:X\to\IR$ where $X\subset\IR^n$ is a semialgebraic set of dimension one.
If $g$ extends to a $\cC^m$ function $f$ on $\IR^n$, then $g$ extends to a $\cC^m$ semialgebraic function on $\IR^n$.
\end{corollary}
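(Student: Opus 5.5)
This is a special case of Corollary \ref{cor:mainM}, with $D=k=1$, the $1\times 1$ matrix $A(x)\equiv 1$ and $B(x):=g(x)$ on $X$. Both $A$ and $B$ are semialgebraic on $X$, since $g$ is semialgebraic by hypothesis. For $f\in\cC^m(\IR^n)$ the system \eqref{eq:mainM} then reads $f(x)=g(x)$ for $x\in X$. This says precisely that $f$ is an extension of $g$ to $\IR^n$.

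By hypothesis, $g$ extends to some $f\in\cC^m(\IR^n)$, so \eqref{eq:mainM} has a solution in $\cC^m(\IR^n,\IR)$. Since $\dim X=1$, Corollary \ref{cor:mainM} applies and gives a $\cC^m$ semialgebraic solution $\tilde g$. That is, $\tilde g\in\cC^m(\IR^n)$ is semialgebraic and $\tilde g|_X=g$, which is the required extension.

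If one prefers to see the reduction explicitly, Corollary \ref{cor:mainM} itself goes back to Theorem \ref{thm:mainjet} via Corollary \ref{cor:mainB}. Fix $\ga$ with $|\ga|=m$ and take the jet system $T^\ga\odot F_x = g(x)T^\ga$ on $X$. By the identity $T^\ga\odot P=P(0)T^\ga$, this system only imposes $F_x(0)=g(x)$. The Taylor field of $f$ solves it, so a semialgebraic $\cC^m$ function with the same property exists.

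There is no real obstacle here: all the analytic work lies in Theorem \ref{thm:main}. The only things to check are that the constant $1\times1$ matrix and $g$ satisfy the semialgebraicity hypotheses of Corollary \ref{cor:mainM}, and that no closedness assumption on $X$ is needed. The latter holds because Corollary \ref{cor:mainB}, and hence Corollary \ref{cor:mainM}, was established for arbitrary one-dimensional semialgebraic $X$ using Remarks \ref{rem:closure}.
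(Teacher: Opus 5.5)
Your proposal is correct and is exactly the paper's argument: specialize Corollary \ref{cor:mainM} to $k=D=1$, $A=(1)$, $B(x)=g(x)$. Your unpacking through the $T^\ga$ jet system and your remark that closedness is handled by Remarks \ref{rem:closure} are accurate, though not needed.
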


\begin{proof}
This follows from Corollary \ref{cor:mainM}, taking $k=D=1$, $A=(1)$ and $B(x)=g(x)$.
\end{proof}

In Section \ref{sec:plane}, we show how to deduce the following result of \cite{FL} from the relative version
Corollary \ref{cor:relmain} of Theorem \ref{thm:main}.
(The same argument, in fact, shows how to deduce this result
for $X\subset \IR^n$ from the assertion of Corollary \ref{cor:relmain} for $\dim X = n-1$.)

\begin{corollary}\label{cor:mainW2}
Consider a semialgebraic function $g:X\to\IR$ where $X\subset\IR^2$ is a semialgebraic set.
If $g$ extends to a $\cC^m$ function $f$ on $\IR^2$, then $g$ extends to a $\cC^m$ semialgebraic function on $\IR^2$.
\end{corollary}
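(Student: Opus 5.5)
We deduce Corollary \ref{cor:mainW2} from the relative version, Corollary \ref{cor:relmain}, of Theorem \ref{thm:main}: a semialgebraic solution can be found that agrees with a given solution on a subset $Y$ where the given solution is already semialgebraic. The idea is to split $X$ into a part of dimension $\le 1$ and an open part where $g$ already determines the jets of $f$.

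First, by Remarks \ref{rem:closure} and continuity of $f$, we may replace $X$ by its closure and $g$ by $f|_{\overline X}$; this extension is semialgebraic because its graph is the closure of the graph of $g$. So assume $X$ is closed. Let $U$ denote the interior of $X$. On $U$ the Taylor field $J_x(f)$ is determined by $g$, so it is a semialgebraic $\cC^m$ field there. Moreover, $X\setminus U$ is a closed semialgebraic set of dimension $\le 1$. In fact $X\setminus \overline U$ has empty interior, and $\bdry U$ has dimension $\le1$.

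Second, set $Z := X\setminus U$ and $Z' := Z\cup \overline U$, which is a closed set containing all of $X$. We want a semialgebraic $\cC^m$ Whitney field on $X$ extending $J(f)|_{\overline U}$ and agreeing with $g$ on $Z$.

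On $\overline U$ the field $J(f)$ is semialgebraic: on $U$ this is the first step, and on $\bdry U$ its values are limits of the semialgebraic field on $U$, whose coefficients are continuous and semialgebraic.

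Now pose on $Z\cup\bdry U$, a closed set of dimension $\le 1$, the system $A(x)\cdot F_x = B(x)$ with two kinds of equations:
\begin{itemize}[label={}]
\item at points $x\in Z\setminus \bdry U$, $F_x(0) = g(x)$, a multiplicative system as in Corollary \ref{cor:mainM};
\item at points $x\in\bdry U$, $F_x = J_x(f)$, the full-jet equation with $A=\mathrm{Id}$, which is trivially multiplicative since its kernel is $0$.
\end{itemize}
Both are semialgebraic, since $J(f)|_{\bdry U}$ is semialgebraic. The field $J(f)$ solves this system. Corollary \ref{cor:relmain}, or even Theorem \ref{thm:main} directly, gives a semialgebraic $\cC^m$ Whitney field $G$ on $Z\cup\bdry U$ solving it. Hence $G=J(f)$ on $\bdry U$ and $G_x(0)=g(x)$ on $Z$.

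Finally, glue $G$ with $J(f)|_{\overline U}$ to get a field $H$ on $X$. The main obstacle is checking that $H$ is a Whitney field of order $m$ on $X$. The condition \eqref{eq:Whitney} must hold as $x,y\to a$ with $x\in U$ and $y\in Z$.

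To handle this, I would avoid direct gluing and instead use Kurdyka--Paw{\l}ucki \cite{KP} to extend $G$ to a semialgebraic $\cC^m$ function $h$ on $\IR^2$. Then $J(f)-J(h)$ on $\overline U$ is a semialgebraic field that vanishes on $\bdry U$, and it is Whitney on $\overline U$ because it is the restriction of $J(f-h)$. Therefore it extends by $0$ on the complement of $U$ to a Whitney field. Continuity of the $\cC^m$ function $f-h$ gives this: the function $(f-h)\cdot 1_{\overline U}$ has jets that are $o$-flat across $\bdry U$, because $J(f-h)=0$ there.

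Applying \cite{KP} to this extended field on the closed set $\overline U\cup(\IR^2\setminus U)=\IR^2$ yields a semialgebraic $\cC^m$ function $k$. Then $h+k$ is semialgebraic and $\cC^m$, equals $f=g$ on $U$, and equals $h$ on $Z$. Since $G_x(0)=g(x)$ on $Z$, this gives $h=g$ on $Z$, so $h+k$ extends $g$.
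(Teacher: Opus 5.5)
Your proof is correct and follows the paper's route. You pass to closed $X$, let $U$ be the interior and $Z=X\setminus U$, and pose on $Z$ the same multiplicative system: the full jet $J_x(f)$ is prescribed on $\bdry U$ and the value $g(x)$ on $Z\setminus\bdry U$. You then apply Theorem~\ref{thm:main} (or Corollary~\ref{cor:relmain}).

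The only difference is the final gluing. The paper glues $G$ on $Z$ with $J(f)$ on $\overline U$ by {\L}ojasiewicz's theorem, since the two closed sets are trivially $1$-regularly situated, and then applies \cite{KP} once. You instead apply \cite{KP} to $G$ and add $(f-h)\cdot 1_{\overline U}$, which is an explicit form of the same gluing. This function is directly a semialgebraic $\cC^m$ function: its derivatives $\p^\al(f-h)\cdot 1_{\overline U}$ are continuous because $f-h$ is $m$-flat on $\bdry U$. So your second appeal to \cite{KP} is not needed.
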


\medskip\noindent
\emph{Bundle formulation.}
Fefferman and Luli reformulated the problem about $\cC^m$ semialgebraic solutions of equations as a problem about sections
of bundles. We do not need this formulation to prove Corollary \ref{cor:mainB} (or Theorem \ref{thm:main}), 
but we point out that the statements are equivalent.

\begin{definitions}\label{def:bundles}
Fix $X\subset \IR^n$. A $\cC^m(\IR^n,\IR^D)$-\emph{bundle} on $X$ is a family $\cH = (H(x))_{x\in X}$, where each $H(x)$
is a coset of a $\cP$-submodule of $\cP^D$.

We say that a bundle $\cH$ on $X$ is \emph{semialgebriaic} if $\{(x,P): x\in X,\, P\in H(x)\}$ is a semialgebraic subset
of $\IR^n\times \cP^D$. If $\cH$ is a semialgebraic bundle on $X$, then $X$ is semialgebraic, by the Tarski-Seidenberg
Theorem (since all fibres of $\cH$ are nonempty).

A \emph{section} of $\cH$ denotes a (vector-valued) function $f\in \cC^m(\IR^n,\IR^D)$ such that $J_x (f) \in H(x)$, for all $x\in X$.
\end{definitions}

A multiplicative system of equations \eqref{eq:eq} defines a semialgebraic bundle on $X$.
Conversely, given a semialgebraic bundle $\cH$ on a subset $X\subset \IR^n$, there is a multiplicative system of 
equations \eqref{eq:eq} which induces $\cH$
in this way; see Section \ref{sec:bundle}. Therefore, Corollary \ref{cor:mainB} and the following are, in fact, equivalent, without the
assumption $\dim X = 1$.

\begin{corollary}\label{cor:mainC}
Consider a semialgebraic bundle $\cH$ on $X \subset \IR^n$, where $\dim X =1$. Assume that $\cH$ has a section $f$. Then
$\cH$ has a semialgebraic section $g$.
\end{corollary}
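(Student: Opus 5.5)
The plan is to deduce Corollary \ref{cor:mainC} from Corollary \ref{cor:mainB}, using the correspondence between semialgebraic bundles and multiplicative systems. Given $\cH = (H(x))_{x\in X}$, we will construct semialgebraic $A: X\to \IR^{k\times d}$ and $B: X\to\IR^k$ such that, for every $x\in X$, $\{P\in\cP^D : A(x)\cdot P = B(x)\} = H(x)$. Then $\Ker A(x)$ is the $\cP$-submodule underlying the coset $H(x)$, so the system \eqref{eq:eq} is multiplicative. A section $f$ of $\cH$ is exactly a $\cC^m$ function whose Taylor field on $X$ solves \eqref{eq:eq}. Corollary \ref{cor:mainB} then gives a semialgebraic $\cC^m$ function $g$ whose Taylor field solves \eqref{eq:eq}, so $J_x(g)\in H(x)$ for all $x\in X$, and $g$ is a semialgebraic section.

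To construct $A$ and $B$, let $\Sigma = \{(x,P): P\in H(x)\}$, which is semialgebraic. We carry out the following steps.
\begin{enumerate}
\item Take $k = d$. The linear part $V(x) = H(x) - H(x)$ is the submodule, and $\{(x,v): v\in V(x)\}$ is semialgebraic by Tarski--Seidenberg, since $v\in V(x)$ if and only if $\exists P,Q\in H(x)$ with $v = P-Q$.
\item Let $\pi(x)$ be the orthogonal projection of $\IR^d$ onto $V(x)^\perp$. Its graph is first-order definable: $\pi(x)$ is the unique linear map $L$ with $L^2 = L$, $L^T = L$, $Lv = 0$ for $v\in V(x)$, and $\langle L w, w\rangle = \langle w,w\rangle$ for all $w\perp V(x)$. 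Hence $x\mapsto \pi(x)$ is a semialgebraic matrix-valued function.
\item Define a point of $H(x)$ semialgebraically. The simplest choice is $P_0(x)$, the unique element of $H(x)$ of minimal Euclidean norm, namely the element of $H(x)$ lying in $V(x)^\perp$. This is again first-order definable, hence semialgebraic.
\item Set $A(x) := \pi(x)$ and $B(x) := \pi(x)P_0(x) = P_0(x)$.
\end{enumerate}

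It remains to check that the solution set is $H(x)$. For $P\in\cP^D$, $\pi(x)P = \pi(x) P_0(x)$ holds if and only if $P - P_0(x)\in \Ker \pi(x) = V(x)$, which holds if and only if $P\in H(x)$.

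The only point needing care is the semialgebraicity of $V$, $\pi$ and $P_0$. I expect this to be routine via Tarski--Seidenberg and uniqueness of definable objects; no choice function is needed because the minimal-norm point and the orthogonal projection are canonical. The argument works verbatim in an o-minimal expansion of $\IR$, and it does not use $\dim X = 1$ except through the appeal to Corollary \ref{cor:mainB}.
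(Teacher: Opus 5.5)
Your proposal is correct and is essentially the paper's argument: Proposition~\ref{prop:bundle} takes $A(x)=I_d-\Pi(x)$, where $\Pi(x)$ is the orthogonal projection onto the submodule; this is exactly your $\pi(x)$. It takes $B(x)=A(x)\sigma(x)$ for a semialgebraic selection $\sigma$, which equals your minimal-norm point $P_0(x)$ whatever $\sigma$ is chosen, and then applies Corollary~\ref{cor:mainB}. Your only deviation is to observe that $B$ is canonical, so no appeal to definable choice is needed.
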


\section{Proof of the main theorem}\label{sec:main}
Given a system of equations \eqref{eq:eq1} on a semialgebraic subset $X\subset \IR^n$, and a (vector-valued) $\cC^m$ Whitney field
$F = (F_x)_{x\in X}$ such that the vector of coefficients $(f_{j,\al}(x))$ is a solution of \eqref{eq:eq1}, we want to find a $\cC^m$
semialgebraic solution.
Introducing variables $y_{j,\al}$,
$j=1,\ldots,D$, $|\al|\leq m$, we get a system of semialgebraic equations $\sum_{j,\al} a_{i,(j,\al)}(x)\,y_{j,\al}  = b_i(x)$
in $(x,y) \in X \times \IR^d$, and can use semialgebraic selection (or the ``fibre-cutting lemma'') to find a semialgebraic solution
$y_{j,\al} = f_{j,\al}(x)$. This is, of course, not enough---we need to find semialgebraic functions $f_{j,\al}(x)$ satisfying \eqref{eq:eq1}
and also satisfying the conditions \eqref{eq:Whitney} which say that these functions define a Whitney field. But \eqref{eq:Whitney} is not a first
order condition, so it does not together with \eqref{eq:eq1} define a set to which we can apply semialgebraic selection.

Suppose $\dim X =1$. Then, locally, $X$ is a finite union of branches of curves. After a coordinate change, such a branch is
tangent to the $x_1$-axis at $0$, and can be written $x_\ell = \vp_\ell(x_1)$, $\ell=2,\ldots,n$, where $x_1 \in (0,\ep)$ or $[0,\ep)$. 
The functions $\vp_i(x_1),\, a_{i,(j,\al)}(x_1,\vp(x_1)),\, b_i(x_1,\vp(x_1))$ are all Nash functions
on $(0,\ep)$, after perhaps shrinking $\ep$.

The astuce of Fefferman and Luli is to show that a $\cC^m$ Whitney field $F$ on the given branch of $X$ 
(the graph of $\vp= (\vp_2,\ldots,\vp_n)$) satisfies \eqref{eq:eq1} if and only if the
functions $\tf_{j,\be}(x_1):= f_{j,(0,\be)}(x_1,\vp(x_1))$,
where $\be = (\be_2,\ldots,\be_n)$, $|\be|\leq m$, 
satisfy an associated system of semialgebraic equations 
\begin{equation}\label{eq:eq2}
\sum_{j=1}^D \sum_{|\be|\leq m} \ta_{i,(j,\be)}(x_1)\,\tf_{j,\be} = \tb_i(x_1), \quad i=1,\ldots,k(m+1),\quad x_1\in (0,\de),
\end{equation}
A semialgebraic Whitney field $G=(G_1,\ldots,G_D)$ as needed
can be realized as $G_j = \sum_\be \tf_{j,\be}(x_1) (y-\vp(x_1))^\be/\be!$, where $y = (x_2,\ldots,x_n)$, using
a semialgebraic section for the projection to $(0,\de)$ of a semialgebraic set which is defined by \eqref{eq:eq2} together with additional
first order conditions, not involving derivatives in $x_1$, but corresponding to Whitney estimates \eqref{eq:Whitney} only on $y$-derivatives
for pairs of points on adjacent tangent branches; see \eqref{eq:2nd}. 

The associated system of equations \eqref{eq:eq2} is constructed in the following lemma, inspired by \cite[Lemma 5.1]{FL}. We
use the notation of Section \ref{sec:intro}.

\begin{lemma}\label{lem:FL5.1}
Let $\psi:U\to\IR^{n-1}$ be a Nash function on an open subset $U\subset \IR$, and let $\Ga_\psi$ denote the graph of $\psi$; i.e.,
$\Gamma_\psi := \{(x,\psi(x))\in U\times\IR^{n-1}\}$. Consider a multiplicative system of equations \eqref{eq:eq} (or \eqref{eq:eq1})
for an $\IR^D$-valued $m$-jet field $F$ on $\Gamma_\psi$,
and assume that, for every $x\in U$, there exists $P\in\cP^D$ such that $A(x,\psi(x))\cdot P = B(x,\psi(x))$. 
Then there is a finite subset $S$ of $U$, and there are
semialgebraic functions $\ta_{i,(j,\beta)},\, \tb_i : U'\to\IR$ on the open set $U':= U\backslash S$, 
where $i=1,\ldots,k(m+1),\, j=1,\ldots,D$, and $\beta\in\IN^{n-1},\,|\beta|\le m$, such that, for every $\cC^m$ Whitney field $F$ on $\Gamma_\psi$ (over $U'$), 
\begin{equation}\label{eq:eqgraph}
A(x,\psi(x))F_{(x,\psi(x))}=B(x,\psi(x)),\quad \text{for all }\, x\in U',
\end{equation}
 if and only if
 \begin{equation}\label{eq:eqgraph1}
 \begin{split}
 \sum_{j=1}^D\sum_{\substack{\be\in\IN^{n-1}\\|\beta|\le m}}\ta_{i,(j,\beta)}(x)f_{j,(0,\beta)}(x,\psi(x))=\tb_i(x),\hspace{3cm}\\ 
 \text{ for all }\, i=1,\ldots,k(m+1) \text{ and } x\in U'.
 \end{split}
 \end{equation}
\end{lemma}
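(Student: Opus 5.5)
The plan is to straighten the curve, write the full jet $F_x$ in terms of its ``transverse'' coefficients $f_{j,(0,\be)}$ and their $x$-derivatives, and then use multiplicativity to remove the derivative terms, leaving pointwise equations. Throughout, $S$ is a finite set of bad points, discarded as they arise.

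\emph{Step 1 (straightening).} Consider the Nash diffeomorphism $\Phi(x,y)=(x,y-\psi(x))$ on $U\times\IR^{n-1}$, which maps $\Ga_\psi$ onto $U\times\{0\}$. Composition with $\Phi$ induces, at each point, a linear isomorphism of $\cP^D$. Its matrix entries are polynomials in the derivatives $\psi^{(r)}(x)$, $r\le m$, so they are Nash in $x$. It is a $\cP$-module isomorphism twisted by the ring automorphism of $\cP$ given by $J\Phi$, so it carries submodules to submodules. It also carries $\cC^m$ Whitney fields on $\Ga_\psi$ to $\cC^m$ Whitney fields on $U\times\{0\}$. The coefficients $f_{j,(0,\be)}$ correspond to the pure $y$-derivatives in the new coordinates via an invertible triangular Nash transformation, and this composes harmlessly with the final equations. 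So I reduce to $\psi=0$, keeping the system multiplicative.

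\emph{Step 2 (semialgebraic normal form of the fibres).} By semialgebraic selection and cell decomposition, after removing a finite set $S$ I may assume:
\begin{itemize}
\item there is a Nash particular solution $P_0(x)$;
\item $M(x):=\Ker A(x,0)$ has constant dimension on each component of $U'$ and is spanned by Nash families;
\item for each $a=0,\ldots,m$, the finer invariants of $M(x)$ obtained below also have constant rank and Nash bases.
\end{itemize}
The last point concerns the following filtration. Write $Q\in\cP^D$ as $Q=\sum_{a\le m}T_1^aQ_a(T')$ with $T'=(T_2,\ldots,T_n)$, and set
\[
M_a(x):=\{Q_0:\ Q\in M(x),\ Q_0=\cdots=Q_{a-1}=0\ \text{after division by } T_1^a\}.
\]
Since $M(x)$ is a submodule, multiplication by $T_1^{m-a}$ shows that $Q\in M(x)$ forces $Q_0\bmod (\deg\le m-a \text{ in } T')$ to lie in a semialgebraically described subspace. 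This is the source of the $k(m+1)$ equations: $k$ linear conditions for each $a=0,\ldots,m$. Concretely, I take $\ta_{i,(j,\be)},\tb_i$ to be the equations expressing $(Q_0)$ truncated to $T'$-degree $\le m-a$ in the image of $T_1^{m-a}\odot M(x)$, applied to $Q=F_x-P_0(x)$.

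\emph{Step 3 (equivalence).} With $\psi=0$, a $\cC^m$ Whitney field satisfies $f_{j,(a,\be)}(x,0)=\p_x^a f_{j,(0,\be)}(x,0)$. One direction is immediate: if $F_x\in P_0(x)+M(x)$, then multiplying by $T_1^{m-a}$ and reading off the $T_1$-free part gives \eqref{eq:eqgraph1}. For the converse, assume the pointwise conditions on $Q_0$ for all $x\in U'$. I differentiate these identities in $x$; this is legitimate since everything is Nash on $U'$ and $F$ is Whitney. The derivatives of the Nash bases of $M(x)$ then give, by induction on $a$, that the full jet $\sum_a T_1^a\p_x^aQ_0/a!$ lies in $M(x)$. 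Here multiplicativity, namely closure of $M(x)$ under $T_1\odot$, is what relates ``derivative along the curve'' to ``multiplication by $T_1$'' in the fibre. Pulling back by $\Phi$ then gives \eqref{eq:eqgraph}.

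\emph{Main obstacle.} The converse direction in Step 3 is where the real work lies. One must show that the pointwise $T_1$-free conditions, once differentiated, reproduce exactly the higher-order membership conditions, and that no genuinely differential constraint remains. Example~\ref{ex:main} shows such a constraint does survive without multiplicativity. My first attempt would be to choose the Nash bases of $M(x)$ adapted to the $T_1$-filtration, in echelon form with respect to $T_1$-degree. Then a derivative of a basis element differs from its $T_1$-shift by an element already controlled at lower filtration level, and the induction can be closed off a further finite set $S$.
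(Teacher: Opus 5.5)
There is a genuine gap, and it is already in Step 2. The equations you propose are not equivalent to \eqref{eq:eqgraph}, so no argument in Step 3 can close the converse. Your $\ta,\tb$ are produced pointwise and algebraically from $M(x)$ and $P_0(x)$: they are linear subspace-membership conditions on truncations of $Q_0$, where $Q=F_x-P_0(x)$. But the true constraints on the transverse coefficients depend on $x$-derivatives of the data.

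Here is a counterexample. Take $n=2$, $m=1$, $D=1$, $\psi=0$. For $P=c_0+c_1T_1+c_2T_2$ set $A\cdot P:=(c_0,\,c_1-c_2)$, and let $B(x)=(x,0)$.
\begin{itemize}
\item $\Ker A=\IR(T_1+T_2)$ is a submodule, since $T_1\odot(T_1+T_2)=T_2\odot(T_1+T_2)=0$.
\item $P_0(x)=x$ is a Nash particular solution.
\item A Whitney field satisfies $f_{(1,0)}(x,0)=\p_x f_{(0,0)}(x,0)$. So the solutions are exactly the fields with $f_{(0,0)}(x,0)=x$ and $f_{(0,1)}(x,0)=1$. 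The value $1$ is the derivative of the datum $x$.
\item The Taylor field of $f(x,y)=x$ is $F_{(x,0)}=x+T_1$. Then $Q=T_1$ and $Q_0=0$, so $F$ satisfies every condition of your type. Yet $A\cdot F=(x,1)\neq B(x)$.
\end{itemize}

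Your converse in Step 3 hides exactly this. Differentiating conditions on $Q_0$ can at most give $\sum_a T_1^a\p_x^aQ_0/a!\in M(x)$. That equals $F_x-P_0(x)$ only if $P_0$ is itself a Taylor field along the curve, and no such $P_0$ is available, since the lemma assumes only pointwise solvability. Nor is there any relation between $\p_x$ of a Nash basis of $M(x)$ and $T_1\odot$. For $m=1$, $M(x)=\IR(u(x)T_1+v(x)T_2)$ is killed by $T_1\odot$, while $u,v$ are arbitrary.

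The missing idea is to apply multiplicativity to the Whitney field $F$, not to the kernel bundle. The paper proceeds as follows.
\begin{enumerate}
\item By the chain rule and Leibniz's rule (your straightening gives the first part), write $L_{i,x}(F)=\sum_{\ell=0}^m\p_x^\ell\La_{i\ell}(F)$. Here $\La_{i\ell}$ is a Nash-linear combination of the $f_{j,(0,\be)}(x,\psi(x))$ with $|\be|\le m-\ell$.
\item Multiply $F$ by the Taylor field of $\chi(x)=(x-x_0)^s/s!$, whose jet at $p_0$ is $T_1^s/s!$. Multiplicativity gives $L_{i,x_0}(J_{p_0}(\chi)\odot F_{p_0})=c_{is}(x_0):=L_{i,x_0}(T_1^s/s!\odot\s(x_0))$.
\item Since $\chi$ depends only on $x$, $\La_{i\ell}(J(\chi)\odot F)=\chi\La_{i\ell}(F)$. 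Leibniz's rule then yields the triangular system $\sum_{\ell\ge s}\binom{\ell}{s}\La_{i\ell}(F)^{(\ell-s)}=c_{is}$ for $s=0,\ldots,m$.
\item Solving from $s=m$ downward gives $\La_{is}(F)=\ga_{is}$, where $\ga_{is}=c_{is}-\sum_{\ell>s}\binom{\ell}{s}\ga_{i\ell}^{(\ell-s)}$. These right-hand sides are built from derivatives of the data, which is precisely what your construction lacks.
\item Conversely, if $\La_{i\ell}(F)=\ga_{i\ell}$ for all $\ell$, then $L_{i,x}(F)=\sum_\ell\ga_{i\ell}^{(\ell)}=c_{i0}=B_i$.
\end{enumerate}
This also explains the count $k(m+1)$: one equation for each $i\le k$ and $\ell\le m$.
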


\begin{proof}
By semialgebraic selection, there is a semialgebraic function $\s: U\to\cP^D$ such that $A(x,\psi(x))\s(x)=B(x,\psi(x))$.
The functions $a_{i,(j,\alpha)}(x),\, B_i(x)$ and $\s(x)$ are Nash on the complement $U'$ of a finite subset $S$ of $U$.

Consider an $\IR^D$-valued $\cC^m$ Whitney field $F$ on $\Gamma_\psi$, $F=(F_1,\ldots,F_D)$. Then, for all $x\in U$,
$j=1,\ldots,D$ and $\al\in\IN^n$, $|\al|\leq m-1$,
\begin{equation}\label{eq:chain}
\p_x\left(f_{j,\al}(x,\psi(x))\right) = f_{j,\al +(1)}(x,\psi(x)) + \sum_{h=1}^{n-1} \frac{\p\psi_{h}(x)}{\p x} f_{j,\al +(h+1)}(x,\psi(x)),
\end{equation}
where $(x,y)=(x,y_1,\ldots,y_{n-1})\in U\times\IR^{n-1}$, and 
$(h)\in\IN^n$ denotes the multiindex with 1 in the $h$th place and 0 elsewhere, by the chain rule for Whitney fields
(or, alternatively, by the ordinary chain rule applied to the derivatives of 
a function $g\in \cC^m(V,\IR^D)$ on a neighbourhood $V$ of $\Gamma_\psi$, 
such that $F$ is the Taylor field of $g$ on $\Ga_\psi$).

Write $A(x,\psi(x))=(a_{i,(j,\alpha)}(x))$ and $B(x,\psi(x))=(B_1(x),\ldots,B_k(x))$. Let $L_{i,x}(P)$ denote the $i$th component of 
$A(x,\psi(x))P$, where $P\in\cP^D$; i.e.,
\begin{equation*}
L_{i,x}(P) = \sum_{j=1}^D\sum_{|\alpha|\le m} a_{i,(j,\alpha)}(x) \p^\alpha P_j(0);
\end{equation*}
thus $F$ is a solution of \eqref{eq:eqgraph} if and only if 
\begin{equation*}
L_{i,x}(F_{(x,\psi(x))}) :=\sum_{j=1}^D\sum_{|\alpha|\le m} a_{i,(j,\alpha)}(x) f_{j,\al}(x,\psi(x)) =B_i(x), 
\end{equation*}
for all $i=1,\ldots,k$ and $x\in U'$. 

By \eqref{eq:chain} and Leibniz's differentiation rule, we obtain
\begin{equation}\label{eq:eq*}
L_{i,x}(F_{(x,\psi(x))}) = \sum_{\ell=0}^m \p_x^\ell \Lambda_{i\ell}(F)(x),
\end{equation}
where
\begin{equation*}
\Lambda_{i\ell}(F)(x) := \sum_{j=1}^D\sum_{\substack{\be\in\IN^{n-1}\\|\beta|\le m-\ell}} C_{i \ell j \beta}(x) f_{j,(0,\be)}(x,\psi(x)),
\end{equation*}
and the $C_{i \ell j \beta}$ are Nash functions on $U'$.

Therefore, $F$ is a solution of \eqref{eq:eqgraph} if and only if $\sum_{\ell} \p_x^\ell \Lambda_{i\ell}(F) = B_i$, $i=1,\ldots,k$. Our aim is 
to find an equivalent system of equations of the form $ \Lambda_{i\ell}(F) = \gamma_{i\ell},\, i=1,\ldots,k,\, \ell=0,\ldots,m$, 
where the $\gamma_{i\ell}$ are Nash functions.

Fix $s\in\{0,\ldots,m\}$ and $x_0\in U'$, and set $p_0 := (x_0,\psi(x_0))$, 
$\chi(x,y) := \chi(x) = (x-x_0)^s / s!$. Then $J_{p_0}(\chi)(T)= T_1^s/s!$. We claim that, for every solution $F$ of \eqref{eq:eqgraph},
\begin{equation}\label{eq:eq**}
\sum_{\ell=s}^m\binom{\ell}{s} \Lambda_{i\ell}(F)^{(\ell-s)}(x_0)=c_{is}(x_0),\ \ \text{where}\ \ 
c_{is}(x_0) :=  L_{i,x_0}\left(\frac{T_1^s}{s!}\odot\s(x_0)\right).
\end{equation}

Indeed, $F_{p_0} -\s(x_0)\in \Ker A(p_0)$, and the latter is a submodule, so that
\begin{equation*}
J_{p_0}(\chi)\odot F_{p_0}-\frac{T_1^s}{s!}\odot\s(x_0) = J_{p_0}(\chi)\odot\left(F_{p_0}-\s(x_0)\right)\in \Ker A(p_0);
\end{equation*}
hence $L_{i, x_0} \left(J_{p_0}(\chi)\odot F_{p_0}\right)=c_{is}(x_0)$. Since $\chi$ is independent of $y$, 
$\Lambda_{i\ell}\left(J_{p_0}(\chi)\odot F_{p_0}\right)\allowbreak =\chi\Lambda_{i\ell}(F)$. 
Therefore evaluating \eqref{eq:eq*} for $J(\chi) \odot F$ at $p_0$ (where $J(\chi)_p := J_p(\chi)$) gives
\begin{equation*}
c_{is}(x_0) = \sum_{\ell=0}^m \p_x^\ell \left(\chi\Lambda_{i\ell}(F)\right)(x_0) 
= \sum_{\ell=s}^m\binom {\ell}{s} \Lambda_{i\ell}(F)^{(\ell-s)}(x_0),
\end{equation*}
as required.

Note that $c_{is}$ is Nash, since it is a polynomial in the $a_{i,(j,\alpha)}(x)$ and $\s(x)$. Moreover, $\chi\equiv1$ for $s=0$; therefore, $c_{i0}=B_i$.

We now define the following Nash functions $\ga_{is}$ recursively, from $s=m$ to $0$.
\begin{equation}\label{eq:ind}
\gamma_{is} := c_{is} - \sum_{\ell=s+1}^m \binom{\ell}{s} \gamma_{i\ell}^{(\ell-s)}.
\end{equation}

By \eqref{eq:eq**}, if $F$ is a solution of \eqref{eq:eqgraph}, then $\Lambda_{is}(F)=\gamma_{is}$, for all $i$ and $s$.

Conversely, assume that $\Lambda_{i\ell}(F)=\gamma_{i\ell}$, for all $i$ and $\ell$.
Since $c_{i0}=B_i$, it follows from \eqref{eq:ind} that $\sum_{\ell=0}^m \gamma_{i\ell}^{(\ell)}=B_i$, so that \eqref{eq:eq*}
implies $L_{i,x}(F_{(x,\psi(x))})=\sum_{\ell=0}^m \gamma_{i\ell}^{(\ell)}=B_i$: i.e. $F$ is a solution of \eqref{eq:eqgraph}

We get the assertion of the lemma by taking  $\ta_{(i,\ell),(j,\be)}:=C_{i\ell j\be}$ and $\tb_{i\ell}:=\gamma_{i\ell}$, where
$i=1,\ldots,k$ and $\ell=0,\ldots,m$. (The $\ta_{(i,\ell),(j,\be)}$ are the entries of a matrix with row indices $(i,\ell)$ and
column indices $(j,\be)$.)
\end{proof}

\begin{remarks}\label{rem:FL5.1} (1) Note that $c_{is}$ in the proof does not depend on the choice of $\s$ (though we do
not use this remark):
if $A(p_0)\tilde\s(x_0)=B(p_0)$, then $\s(x_0)-\tilde\s(x_0)\in \Ker A(p_0)$, hence $(T_1^s/s!)\odot(\s(x_0)-\tilde\s(x_0))\in \Ker A(p_0)$. 

\medskip\noindent
(2) The proof above can be used for a more general version of Lemma \ref{lem:FL5.1}, where $\psi:U\to\IR^q$ is a Nash 
function on an open subset $U\subset \IR^p$, and $S$ is a closed semialgebraic subset of $U$ of dimension $<p$
(cf. \cite[Lemma 5.1]{FL}).

\medskip\noindent
(3) In each of the theorems and corollaries in Section \ref{sec:intro} involving a system of equations \eqref{eq:eq}
(or a special case of the latter), the stated assumption implies that the system of equations admits a solution $P\in \cP^D$
at every point.
\end{remarks}

\begin{proof}[Proof of Theorem \ref{thm:main}]
Let $X$ be a 1-dimensional semialgebraic subset of $\IR^n$. Then $X$ is locally closed (i.e., $X$ is closed in some
open neighbourhood) and $\overline{X}\backslash X$ is a finite set.

There is a finite subset $\Sigma$ of $X$ such that $X\backslash \Sigma$ is Nash submanifold of $\IR^n$ of pure dimension $1$,
Therefore, there is a perhaps larger finite subset $T \supset \Sigma$ of $X$ such that, for every connected component $C$
of $X\backslash T$, the mapping $C\to \IR$ given by the projection $(x_1,\ldots,x_n)\mapsto x_i$ has constant rank $1$,
for some $i$. It follows that $X\backslash T$ is a finite disjoint union of the graphs $\Ga_\psi$ of Nash functions $\psi: U \to \IR^{n-1}$,
where $U$ is an open subset of the $x_i$-axis, for some $i$, with values in $\IR^{n-1}$ (with coordinates $(x_1,\ldots,\widehat{x_i},\ldots x_n)$,
$x_i$ removed).

Consider a mutiplicative system of equations \eqref{eq:eq} on $X$, which admits a $\cC^m$ Whitney field solution. Using Lemma \ref{lem:FL5.1},
we will find a finite covering of $X$ by open semialgebraic sets $V_\rho$, such that, for each $\rho$, $X\cap V_\rho$ is closed in $V_\rho$ 
and the conclusion of Theorem \ref{thm:main} holds for $X\cap V_\rho$. The conclusion of the theorem for $X$ then follows 
using a $\cC^m$
semialgebraic partition of unity for $X$ subordinate to the covering $\{V_\rho\}$ (the multiplicative property of \eqref{eq:eq} is
needed here). 

First, by Lemma \ref{lem:FL5.1}, after enlarging the finite subset $T$ of $X$, we can assume that, for each of the Nash functions $\psi$ above,
a $\cC^m$ Whitney field $F$ on $\Ga_\psi$ is a solution of \eqref{eq:eq} on $\Ga_\psi$ if and only if
the coefficients $f_{j,(0,\be)}(x,\psi(x))$, $x\in U$, satisfy the system of equations \eqref{eq:eqgraph1}. So the
existence of a $\cC^m$ Whitney field solution of \eqref{eq:eq} implies, by semialgebraic selection, that
\eqref{eq:eqgraph1} has a semialgebraic solution $f_{j,\be}(x)$, $x\in U$. 
By enlarging $T$ again by finitely many additional points, we can assume that every 
$f_{j,\be}(x)$ is a Nash function on $U$ (for every $\psi$). Define $f' = (f_1',\ldots,f_D'): U\times \IR^{n-1} \to \IR^D$ by

\begin{equation*}
f'_j(x,y) := \sum_{|\be|\leq m} \frac{f_{j,\be}(x)}{\be!} (y-\psi(x))^\be,\quad j=1,\ldots.D.
\end{equation*}
Then $f'$ is a vector-valued Nash function, and its Taylor field $G := (J_p (f'))_{p\in \Ga_\psi}$ is a $\cC^m$ semialgebraic Whitney field
which is solution of $\eqref{eq:eq}$ on $\Ga_\psi$, again by Lemma \ref{lem:FL5.1}.

Note that $V_\psi := \{z\in \IR^n: \mathrm{d}(z,\Ga_\psi) < \mathrm{d}(z, \overline{X\backslash \Ga_\psi})\}$, 
is a semialgebraic open neighbourhood of $\Ga_\psi$ containing no other points of $X$. There are finitely many $\Ga_\psi \subset X$,
and $X = T \bigcup_\psi \Ga_\psi$.

So it remains to show that every point $p\in T$ has a semialgebraic neighbourhood $V_p$ such that the conclusion of Theorem
holds for $X\cap V_p$. This assertion is trivial if $p$ is an isolated point of $X$. 

Note that, up to this point in the proof, we have
used that assumption that the system of equations \eqref{eq:eq} has a $\cC^m$ Whitney field solution only to use Lemma \ref{lem:FL5.1}.

Let $B(p,\ep)$ denote the open ball with centre $p$ and radius $\ep > 0$. Suppose $p\in T$ is not an isolated point of $X$.
Then, for $\ep$ small enough, $X\cap B(p,\ep)$ is a finite union of continuous semialgebraic curves $\ga: [0,\de_\ga) \to B(p,\ep)$
such that $\ga(0) = p$. We can assume that each $\ga$ is Nash on $(0,\de_\ga)$, and the $\ga\backslash\{p\}$ are disjoint.

Let us group the curves $\ga$ into clusters $\cK$, where $\cK$ denotes the union of the curves $\ga$ with the same tangent direction at $p$
(the tangent direction is $\lim_{t\to 0+} \ga'(t)/|\ga'(t)|$). 

Then any two clusters $\cK_1,\,\cK_2$ are $1$-\emph{regularly situated} at $p$; 
i.e., there exists $c>0$ such that $\mathrm{d}(x,\cK_1) + \mathrm{d}(x,\cK_2) \geq c\, \mathrm{d}(x,\cK_1\cap \cK_2)$ in some neighbourhood of $p$.
Therefore, by {\L}ojasiewicz's gluing theorem \cite[{\S}I.5]{M}, given $\cC^m$ Whitney fields $G_1$ on $\cK_1$ and $G_2$ on $\cK_2$,
with $G_{1,p} = G_{2,p}$, there is a $\cC^m$ Whitney field $G$ on $\cK_1 \cup \cK_2$ such that $G=G_1$ on $\cK_1$ and $G=G_2$ on $\cK_2$.

So it is enough to prove the theorem for each cluster, provided we reduce to the case that $F$ is flat at $p$ (i.e., $F_p = 0$) and show that,
in this case, we can find a $\cC^m$ semialgebraic solution which is also flat at $p$. We can indeed assume that $F$ is flat at $p$ because
$F_p \in \cP^D$ can be considered a polynomial mapping on $\IR^n$, so it suffices to prove the theorem for $F-G$, where $G$ is the
Whitney field given by $G_x := J_x(F_p)$, for all $x$.

We can assume that $p=0$. For a given cluster $\cK$, we can also assume that the common tangent direction is the positive $x_1$-direction
(by an orthogonal transformation).
In conclusion, we have reduced the remaining special case of the theorem to the following proposition.
\end{proof}

\begin{proposition}\label{prop:cluster}
Let $\psi_1,\ldots,\psi_r:[0,\de)\to\IR^{n-1}$ denote continuous semialgebraic functions such that,
for every $x\in(0,\de)$, $\psi_1(x),\ldots,\psi_r(x)$ are distinct, and, for every $s=1,\ldots,r$,
\begin{equation*}
\psi_s \text{ is Nash on } (0,\de),\qquad \psi_s(0)=0,\qquad \lim_{x\to 0+}\psi_s'(x)=0.
\end{equation*}
Set $X := \bigcup_{s=1}^r\Gamma_{\psi_s}\subset\IR^n$, and consider a multiplicative system of equations \eqref{eq:eq} on $X$.
Assume that there exists a $\cC^m$ Whitney field $F$ on $B(0,\de)\cap X$ such that $F_0=0$ and $A(p)\cdot F_p=B(p)$, for all
$p\in B(0,\de)\cap X$. Then there exists a $\cC^m$ semialgebraic Whitney field $G$ on
$B(0,\ep)\cap X$, where $0<\ep\leq \de$, such that $G_0 =0$ and $A(p)\cdot G_p=B(p)$, for all $p\in B(0,\ep)\cap X$.
\end{proposition}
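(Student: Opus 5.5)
We prove Proposition \ref{prop:cluster} by applying Lemma \ref{lem:FL5.1} to each branch $\Ga_{\psi_s}$ separately. We then reformulate the Whitney conditions on the union $X$ as first order (pointwise) semialgebraic conditions in the single variable $x$, so that semialgebraic selection can be applied to all branches simultaneously.

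\emph{Reduction to the transversal data.} First shrink $\de$. By Lemma \ref{lem:FL5.1} and the finiteness of $S$, we may assume that on $(0,\de)$, for each $s$, a $\cC^m$ Whitney field $F$ on $\Ga_{\psi_s}$ solves \eqref{eq:eq} if and only if the functions $f^s_{j,\be}(x):=f_{j,(0,\be)}(x,\psi_s(x))$ satisfy \eqref{eq:eqgraph1}, with Nash coefficients $\ta^s,\tb^s$. Conversely, any Nash family $(g^s_{j,\be})$ on $(0,\de)$ determines a Nash function
\[
g^s_j(x,y)=\sum_{|\be|\le m}g^s_{j,\be}(x)(y-\psi_s(x))^\be/\be! .
\]
Its Taylor field on $\Ga_{\psi_s}$ has transversal coefficients $g^s_{j,\be}$, and it solves \eqref{eq:eq} on $\Ga_{\psi_s}$ exactly when \eqref{eq:eqgraph1} holds. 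The task is therefore to choose the $g^s_{j,\be}$ so that the resulting fields glue to a $\cC^m$ Whitney field on $X\cap B(0,\ep)$ that is flat at $0$.

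\emph{First order conditions.} Set $\eta(x):=\min_{s\neq t}|\psi_s(x)-\psi_t(x)|$. This is positive and semialgebraic on $(0,\de)$, and by the tangency hypothesis $\eta(x)=o(x)$. Write $F^s_x$ for the jet of $F$ at $(x,\psi_s(x))$. Consider the semialgebraic set $E$ of all $(x,(g^s_{j,\be}))$ with $x\in(0,\de)$ satisfying \eqref{eq:eqgraph1} for every $s$ and, for all $s,t$ and all $|\be|\le m$,
\begin{equation}\label{eq:2nd}
\Bigl|g^s_{j,\be}(x)-\sum_{|\ga|\le m-|\be|}g^t_{j,\be+\ga}(x)\frac{(\psi_s(x)-\psi_t(x))^\ga}{\ga!}\Bigr|\le \theta(x)\,|\psi_s(x)-\psi_t(x)|^{m-|\be|},\qquad |g^s_{j,\be}(x)|\le \theta(x)\,x^{m-|\be|}.
\end{equation}
Here $\theta$ is a semialgebraic gauge function tending to $0$, to be specified. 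The Whitney estimates \eqref{eq:Whitney} for $F$ (which is flat at $0$), applied to the points $(x,\psi_s(x))$ and $(x,\psi_t(x))$ and to $(x,\psi_s(x))$ and $0$, give
\[
\omega(x):=\max\Bigl(\max_{s\ne t}\frac{|\text{LHS for }F|}{|\psi_s(x)-\psi_t(x)|^{m-|\be|}},\ \max_s\frac{|f^s_{j,\be}(x)|}{x^{m-|\be|}}\Bigr)\to0 .
\]
However, $\omega$ is not semialgebraic. To handle this, define $\theta$ as the semialgebraic function $\theta(x):=$ the infimum of the left-hand ratios over all jets satisfying \eqref{eq:eqgraph1}. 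Then $\theta\le\omega\to0$, and $\theta$ is semialgebraic. Replace $\theta$ by $2\theta+x$, so that the fibres of $E$ are nonempty, and select a semialgebraic section $g$ of $E$. After shrinking $\ep$, $g$ is Nash on $(0,\ep)$.

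\emph{Verification of the Whitney property.} This is the main step. On each branch, $G$ is the Taylor field of a Nash function, so the Whitney condition holds locally away from $0$. It remains to check \eqref{eq:Whitney} near $0$: for the $y$-derivatives this follows from \eqref{eq:2nd}, while the $x$-derivatives are controlled by $\p_x$ of the $g^s$ via the chain rule \eqref{eq:chain}. We argue as in Fefferman--Luli. Since $g$ is semialgebraic and $o(x^{m-|\be|})$, the Łojasiewicz/Puiseux structure on $(0,\ep)$ gives $\p_x^k g^s_{j,\be}=o(x^{m-|\be|-k})$ (derivative estimates for definable functions of one variable). Hence each $g^s_j$ is a $\cC^m$ function on the cuspidal region $\{|y-\psi_s(x)|\le Cx\}$, flat at $0$, which handles pairs of points on the same branch and pairs $(p,0)$. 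For points $p=(x,\psi_s(x))$ and $q=(x',\psi_t(x'))$ on different branches, reduce to $x=x'$ by moving along the branch $t$ with the one-branch estimate, at cost $o(|x-x'|\,|p|^{m-1})$. If $|x-x'|\gtrsim |\psi_s(x)-\psi_t(x)|$ this is acceptable. Otherwise the estimate \eqref{eq:2nd} at the common abscissa applies, and the required comparison $|p-q|\gtrsim|\psi_s(x)-\psi_t(x)|$ holds because the branches are distinct Nash arcs, so $\psi_s-\psi_t$ has a single Puiseux order near $0$. The delicate point, which I expect to be the main obstacle, is making this case split uniform: it requires controlling $|\psi_s(x)-\psi_t(x)|$ against $|\psi_s(x')-\psi_t(x')|$ for nearby $x,x'$, which follows from the Puiseux expansion of $\psi_s-\psi_t$. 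Finally, Whitney's extension theorem on the closed set $X\cap\overline{B(0,\ep/2)}$ confirms that $G$ is a $\cC^m$ semialgebraic Whitney field with $G_0=0$.
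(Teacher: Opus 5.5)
Your reduction via Lemma \ref{lem:FL5.1} and your construction of semialgebraic transversal data are sound. Taking $\theta(x)$ to be the definable infimum of the ratios over $V_x$, and selecting from $\{v\in V_x:\text{ratios}\le 2\theta(x)+x\}$, is a tidy substitute for the paper's curve-selection step.

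The gap is in the verification, exactly where you say that ``the $x$-derivatives are controlled by $\partial_x$ of the $g^s$ via the chain rule''. Write $\rho_{st}:=|\psi_s-\psi_t|$. Your first-order conditions compare $g^s_{j,\be}$ and $g^t_{j,\be}$ at a common abscissa only for transversal indices, with error $o(\rho_{st}(x)^{m-|\be|})$. But the Whitney condition between $(x,\psi_s(x))$ and $(x,\psi_t(x))$ must hold for every $\partial^{(\ell,\al)}$. So you also need, for $\ell>0$, $\partial^{(\ell,\al)}g^s_j(x,\psi_s(x))-\sum_{|\be|\le m-|\al|-\ell}\partial^{(\ell,\al+\be)}g^t_j(x,\psi_t(x))(\psi_s(x)-\psi_t(x))^\be/\be!=o(\rho_{st}(x)^{m-|\al|-\ell})$. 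The only derivative bound you invoke, $\partial_x^k g^s_{j,\be}=o(x^{m-|\be|-k})$, gives just $o(x^{m-|\al|-\ell})$. That is useless for highly tangent branches, e.g.\ $\rho_{st}(x)=x^{10}$.

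Already for $n=2$, $m=2$, $\ell=1$, $\al=0$ the problem appears. Write $a_k:=g^s_{j,k}$, $b_k:=g^t_{j,k}$, $\delta:=\psi_s-\psi_t$, and set $R_0:=a_0-b_0-b_1\delta-\tfrac12 b_2\delta^2=o(\rho_{st}^2)$ and $R_1:=a_1-b_1-b_2\delta=o(\rho_{st})$. The required quantity is then $R_0'-\psi_s'R_1+\tfrac12 b_2'\delta^2$, and $R_0'=o(\rho_{st})$ does not follow from $R_0=o(\rho_{st}^2)$ by the chain rule. The missing idea is the paper's Claim \ref{claim:second}: induct on $\ell$ and differentiate the same-abscissa remainder $W_{\ell,\al}$. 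The key input is that for a Nash (or definable) function, $W=o(\rho_{st}^k)$ implies $W'=O(W/x)=o(\rho_{st}^{k-1})$, because $\rho_{st}\le 2x$. The other terms are handled by Claim \ref{claim:first} and $\psi'=O(1)$.

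Your treatment of $x\neq x'$ is also off, though it is easily repaired. You assign cost $o(|x-x'|\,|p|^{m-1})$ to moving from $q=(x',\psi_t(x'))$ to $q'=(x,\psi_t(x))$. This is not acceptable even when $|x-x'|\gtrsim\rho_{st}(x)$: then $|p-q|\approx|x-x'|$, while $|p|\approx x$ can be much larger, so the cost is not $o(|p-q|^{m})$.

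The correct bound, from the standard identity for Taylor polynomials at two base points, is $o((|x-x'|+\rho_{st}(x))^{m-|\al|})$. Moreover $\rho_{st}(x)\le|\psi_s(x)-\psi_t(x')|+|\psi_t(x')-\psi_t(x)|\le(1+c)|p-q|$ with $c:=\sup|\psi_t'|$. So no case split and no Puiseux comparison of $\rho_{st}(x)$ with $\rho_{st}(x')$ is needed; the obstacle you single out is not a real one.

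The paper packages this by extending each branch function to a $\cC^m$ function $g^s$ on the convex region $E_\ep$, which contains all branches. The Whitney remainder for $(p,q)$ then splits into two pieces. The first is $\partial^\al(g^s-g^t)(p)$, which is $o(\rho_{st}(x)^{m-|\al|})$ by Claim \ref{claim:third}, itself a consequence of Claim \ref{claim:second}. The second is a Taylor remainder of $g^t$ between $q$ and $p$.
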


\begin{proof}
Set 
\begin{equation*}                    
\rho_{st}(x):= |\psi_s(x)-\psi_t(x)|,\quad s,t=1,\ldots,r,
\end{equation*}
and 
$\rho(x):=\max_{s}|\psi_s(x)|$. We can assume that $\rho(x)\leq x$, $x\in(0,\ep)$, by decreasing $\ep$ if necessary.
 
By Lemma \ref{lem:FL5.1} (decreasing $\ep$ again if needed), there is a finite index set $I$ and, for each $s=1,\ldots,r$,
there are semialgebraic functions $\ta^s_{i,(j,\beta)}(x),\, \tb^s_i(x)$, $x\in (0,\ep)$, where $i\in I$, $j=1,\ldots,D$, and $\be\in\IN^{n-1}$, $|\be|\leq m$,
such that, for any $\cC^m$ Whitney field $H$ on $\Gamma_{\psi_s}$,
\begin{equation*}
A(x,\psi_s(x))H_{(x,\psi_s(x))}=B(x,\psi_s(x)),\quad \text{for all }\, x\in (0,\ep),
\end{equation*}
 if and only if
\begin{equation}\label{eq:eqn}
\sum_{j=1}^D\sum_{|\beta|\le m}\ta^s_{i,(j,\beta)}(x)h_{j,(0,\beta)}(x,\psi_s(x))=\tb^s_i(x),\quad \text{for all } i\in I \text{ and } x\in (0,\ep).
\end{equation}

Set $f^s_{j,\be}(x):=f_{j,(0,\beta)}(x,\psi_s(x))$, $x\in (0,\ep)$, $s=1,\ldots,r$. These functions satisfy the following three conditions:
\begin{align}
&\sum_{j=1}^D \sum_{|\beta|\leq m}\ta^s_{i,(j,\beta)}(x)f^s_{j,\be}(x)=\tb^s_i(x),\quad i\in I,\,\,s=1,\ldots,r;\label{eq:1st}\\
&f^s_{j,\al}(x)=\sum_{|\beta|\leq m-|\alpha|}\frac{f^t_{j,\alpha+\beta}(x)}{\beta!}\left(\psi_s(x)-\psi_t(x)\right)^\beta+\o{x\to 0+}\left(\rho_{st}(x)^{m-|\alpha|}\right),\label{eq:2nd}\\ 
&\hspace{5cm} \al, \be \in\IN^{n-1},\,\, s,t=1,\ldots,r,\,\,s\neq t;\notag\\
&f^s_{j,\al}(x)=\o{x\to0+}\left(x^{m-|\alpha|}\right),\quad s=1,\ldots,r.\label{eq:3rd}
\end{align}
The condition \eqref{eq:1st} is from \eqref{eq:eqn} (i.e., from Lemma \ref{lem:FL5.1}),
\eqref{eq:2nd} is from the Whitney field condition \eqref{eq:Whitney}
for a pair of points $p=(x,\psi_s(x)),\, q=(x,\psi_t(x))$ tending to $0$,
and \eqref{eq:3rd} is from the latter condition where $q=0$, together with the flatness assumption $F_0 = 0$
and the inequalities $x\leq |p|\leq \sqrt{2}x$ (from the assumption $\rho(x)\leq x$).

Consider the mapping  $\Phi: (0,\ep) \to \IR^N$ with components $f^s_{j,\be}(x)$, in any given order. 
Condition \eqref{eq:1st} defines a semialgebraic subset $V$ of $(0,\ep)\times\IR^N$
such that $\Phi(x)\in V_x$, for all $x\in (0,\ep)$, where $V_x$ denotes the fibre of $V$ over $x$.
Dividing \eqref{eq:2nd} and
\eqref{eq:3rd} by $\rho_{st}(x)^{m-|\alpha|}$ and $x^{m-|\alpha|}$, respectively, we obtain a matrix-valued semialgebraic function $\La(x)$
such that $\La(x)\cdot\Phi(x) \to 0$ as $x\to 0+$.  Since $\lim_{x\to 0+}\Phi(x)=0$, the origin belongs to 
the closure of semialgebraic set $\{(x,\La(x)\cdot v): v\in V_x\}$. By the curve selection lemma, there is a semialgebraic curve in this set
tending to $0$. By semialgebraic selection, we then obtain semialgebraic functions $f^s_{j,\be}: [0,\ep)\to \IR$ with all $f^s_{j,\be}(0)=0$,
satisfying the conditions \eqref{eq:1st}--\eqref{eq:3rd}.

\emph{From now on, $f^s_{j,\be}$ will denote these semialgebraic functions, rather than the functions we started with.}
We can assume that all $f^s_{j,\be}$ are Nash on $(0,\ep)$.

We recall that a Nash function $\eta(x)$ on $(0,\ep)$ has a Puiseux expansion, so that 
\begin{equation}\label{eq:puis}
\p_x^k \,\eta(x) = \O{x\to 0+}\left(x^{-k}\eta(x)\right).
\end{equation}

Set
\begin{equation}\label{eq:fs}
f^s_j(x,y) := \sum_{\substack{\be\in\IN^{n-1}\\|\be|\leq m}} \frac{f^s_{j,\be}(x)}{\be!} (y-\psi_s(x))^\be, \quad j=1,\ldots.D,\, s=1,\ldots,r,
\end{equation}
where $x\in [0,\ep)$ and $y\in\IR^{n-1}$. The following three claims establish properties of these functions that we need.
In these claims, we fix $j$. Set $E_\ep := \{(x,y):\,x\in[0,\ep),\, \allowbreak |y|\leq x\}$.

\begin{claim}\label{claim:first}
\begin{equation*}
\p^{(\ell,\alpha)}f^s_j(x,y)=\o{(x,y)\to0}\left(x^{m-|\alpha| -\ell}\right),\quad s=1,\ldots,r,\ \ell\in\IN,\ \alpha\in\IN^{n-1},
\end{equation*}
where $(x,y)\in E_\ep$. (We do not assume that $\ell +|\al| \leq m$.)
\end{claim}

\begin{proof}
Using \eqref{eq:puis} and $\psi_s(x)=\o{}(x)$, we see that
\begin{equation*}
\partial_x^p\left((y-\psi_s(x))^{\beta-\alpha}\right)=\O{}(x^{|\beta|-|\alpha|-p})\quad \text{ on }\, E_\ep,
\end{equation*}
and, therefore, using also \eqref{eq:2nd}, \eqref{eq:3rd},
\begin{equation*}
\p^{(\ell,\alpha)}f^s_j(x,y)=\sum_{\substack{\beta\geq\alpha\\|\be|\leq m}}
\sum_{q=0}^{\ell} \O{}\left(x^{|\beta|-|\alpha|-(\ell-q)}\right) \o{}\left(x^{m-|\beta|-q}\right)\quad \text{ on }\, E_\ep.
\end{equation*}
The result follows.
\end{proof}

\begin{claim}\label{claim:second}
For all $s,t=1,\ldots,r$ with $s\neq t$, and all $\ell\in\IN$, $\alpha\in\IN^{n-1}$, where $\ell+|\alpha|\leq m$, 
\begin{equation*}
\begin{split}
\p&^{(\ell,\alpha)}f^s_j(x,\psi_s(x))\\
&=\sum_{|\beta|\le m-|\alpha|-\ell} \frac{\left(\psi_s(x)-\psi_t(x)\right)^\beta}{\beta!} \p^{(\ell,\alpha+\beta)}f^t_j\left(x,\psi_t(x)\right) 
+\o{x\to0+}\left(\rho_{st}(x)^{m-|\alpha|-\ell}\right).
\end{split}
\end{equation*}
\end{claim}

\begin{proof}
We argue by induction on $\ell$. The base case $\ell=0$ is immediate from \eqref{eq:2nd}.  Note that the claim for given $\ell$
is equivalent to the statement that 
\begin{equation}\label{eq:toprove}
W_{\ell,\alpha}(x) = \o{x\to0+}\left(\rho_{st}(x)^{m-|\alpha|-\ell}\right), 
\end{equation}
where 
\begin{equation*}
W_{\ell,\alpha}(x) := \p^{(\ell,\alpha)}f^s_j(x,\psi_s(x))-\sum_{|\beta|\le m-|\alpha|-\ell-1}\frac{\left(\psi_s(x)
-\psi_t(x)\right)^\beta}{\beta!} \p^{(\ell,\alpha+\beta)}f^t_j\left(x,\psi_t(x)\right),
\end{equation*}
because the additional term of the sum in Claim \ref{claim:second} is $\o{x\to0+}\left(\rho_{st}(x)^{m-|\alpha|-\ell}\right)$, by Claim \ref{claim:first}.
So let us assume \eqref{eq:toprove} and prove the same statement for $\ell +1$.

We first compute the derivative $W'_{\ell,\alpha}(x)$, and compare with $W_{\ell+1,\alpha}(x)$ to find
\begin{align*}
W&_{\ell+1,\alpha}(x) = W'_{\ell,\alpha}(x) + \sum_{|\beta|=m-|\alpha|-\ell-1}\frac{\left(\psi_s(x)-\psi_t(x)\right)^\beta}{\beta!}\p^{(\ell+1,\alpha+\beta)}f^t_j(x,\psi_t(x))\\
&-\sum_{i=1}^{n-1} \p^{(\ell,\alpha +(i))}f^s_j(x,\psi_s(x))\psi'_{s,i}(x) \\
&+\sum_{|\beta|\le m-|\alpha|-\ell-1}\, \sum_{i=1}^{n-1} \frac{\left(\psi_s(x)-\psi_t(x)\right)^\beta}{\beta!} \p^{(\ell,\alpha+\beta+(i))}f^t_j\left(x,\psi_t(x)\right)\psi'_{t,i}(x)\\
&+\sum_{|\beta|\le m-|\alpha|-\ell-1} \ \sum_{\substack{i=1,\ldots,n-1\\ \be_i\neq 0}}\frac{\left(\psi_s(x)-\psi_t(x)\right)^{\beta-(i)}}{(\beta-(i))!} \left(\psi'_{s,i}(x)-\psi'_{t,i}(x)\right)
 \p^{(\ell,\alpha+\beta)}f^t_j\left(x,\psi_t(x)\right),
\end{align*}\\
so that
\begin{align*}
W_{\ell+1,\alpha}(x) = &W'_{\ell,\alpha}(x) + \sum_{|\beta|=m-|\alpha|-\ell-1}\frac{\left(\psi_s(x)-\psi_t(x)\right)^\beta}{\beta!}\p^{(\ell+1,\alpha+\beta)}f^t_j(x,\psi_t(x))\\
&-\sum_{i=1}^{n-1}\psi'_{s,i}(x) W_{\ell,\al+(i)}\\
&+ \sum_{i=1}^{n-1}\psi'_{t,i}(x) \sum_{|\beta|=m-|\alpha|-\ell-1} \frac{\left(\psi_s(x)-\psi_t(x)\right)^\beta}{\beta!}
\p^{(\ell,\alpha+\beta+(i))}f^t_j(x,\psi_t(x)).
\end{align*}\\

We can separately estimate the order of each of the preceding four summands of $W_{\ell+1,\alpha}(x)$.

\medskip\noindent
(1) By \eqref{eq:puis},
\begin{equation}\label{eq:(1)}
W'_{\ell,\alpha}=\o{x\to0^+}\left(\rho_{st}(x)^{m-|\alpha|-\ell}x^{-1}\right) = \o{x\to0+}\left(\rho_{st}(x)^{m-|\alpha|-\ell-1}\right).
\end{equation}

\medskip\noindent
(2) When $|\beta|=m-|\alpha|-\ell-1$, $\p^{(\ell+1,\alpha+\beta)}f^t_j\left(x,\psi_t(x)\right)=\o{x\to0+}\left(1\right)$, by Claim \ref{claim:first},
so that 
\begin{equation*}
\frac{\left(\psi_s(x)-\psi_t(x)\right)^\beta}{\beta!}\p^{(\ell+1,\alpha+\beta)}f^t_j(x,\psi_t(x)) 
= \o{x\to0+}\left(\rho_{st}(x)^{m-|\alpha|-\ell-1}\right).
\end{equation*}

\medskip\noindent
(3) By the inductive assumpion, since $\psi_{s,i}'(x)=\O{x\to0+}(1)$, 
\begin{equation*}
\sum_{i=1}^{n-1}\psi_{s,i}'(x)W_{\ell,\alpha+(i)}(x)=\o{x\to0+}\left(\rho_{st}(x)^{m-|\alpha|-\ell-1}\right).
\end{equation*}

\medskip\noindent
(4) When $|\beta|=m-|\alpha|-\ell-1$, we get
\begin{equation*}
\psi'_{t,i}(x) \frac{\left(\psi_s(x)-\psi_t(x)\right)^\beta}{\beta!}
\p^{(\ell,\alpha+\beta+(i))}f^t_j(x,\psi_t(x)) = \o{x\to0+}\left(\rho_{st}(x)^{m-|\alpha|-\ell-1}\right)
\end{equation*}
as in (2), since $\psi_{t,i}'(x)=\O{x\to0+}(1)$.\\

This completes the inductive step, to prove Claim \ref{claim:second}.
\end{proof}

\begin{claim}\label{claim:third}
For all $s,t=1,\ldots,r$ with $s\neq t$, and all $\ell\in\IN$, $\alpha\in\IN^{n-1}$, where $\ell+|\alpha|\leq m$, 
\begin{equation*}
\p^{(\ell,\alpha)}\left(f^s_j-f^t_j\right)(x,\psi_s(x))=\o{x\to0+}\left(\rho_{st}(x)^{m-|\alpha|-\ell}\right).
\end{equation*}
\end{claim}

\begin{proof}
We write 
\begin{equation*}
\begin{split}
\p^{(\ell,\alpha)}&\left(f^s_j-f^t_j\right)(x,\psi_s(x))\\
&=\left(\p^{(\ell,\alpha)}f^s_j(x,\psi_s(x))-P_x(\psi_s(x))\right)-\left(\p^{(\ell,\alpha)} f^t_j(x,\psi_s(x))-P_x(\psi_s(x))\right),
\end{split}
\end{equation*}
where 
\begin{equation*}
P_x(y) := \sum_{\substack{\beta\in\IN^{n-1}\\|\beta|\le m-|\alpha|-\ell}}\frac{\p^{(\ell,\alpha+\beta)}f^t_j\left(x,\psi_t(x)\right)}{\beta!} \left(y-\psi_t(x)\right)^\beta;
\end{equation*}
i.e., $P_x(y)$ is the Taylor polynomial of order $m-|\alpha|-\ell$ of $\p^{(\ell,\alpha)}f^t_j(x,\cdot)$ at $\psi_t(x)$.

By Claim \ref{claim:second},
\begin{equation*}
\p^{(\ell,\alpha)}f^s_j(x,\psi_s(x))-P_x(\psi_s(x))=\o{x\to0+}\left(\rho_{st}(x)^{m-|\alpha|-\ell}\right).
\end{equation*}

Since $\p^{(\ell,\alpha)}f^t_j(x,y)$ is a polynomial of degree $\leq m-|\alpha|$ in $y$, it coincides with its Taylor expansion 
of order $m-|\alpha|$ at $\psi_t(x)$; therefore,
\begin{equation*}
\begin{split}
\p^{(\ell,\alpha)}f^t_j(x,&\psi_s(x))-P_x(\psi_s(x))\\
&=\sum_{m-|\alpha|-\ell<|\beta|\leq m-|\alpha|}
\frac{\p^{(\ell,\alpha+\beta)}f^t_j(x,\psi_t(x))}{\beta!} \left(\psi_s(x)-\psi_t(x)\right)^\beta.
\end{split}
\end{equation*}

In each summand, $\left(\psi_s(x)-\psi_t(x)\right)^\beta=\O{x\to0+}\left(\rho_{st}(x)^{|\beta|}\right)$ and, by Claim \ref{claim:first},
\begin{equation*}
\p^{(\ell,\alpha+\beta)}f^t_j(x,\psi_t(x))=\o{x\to0+}\left(x^{m-|\alpha|-l-|\beta|}\right).
\end{equation*}
So each summand can be estimated as $\o{x\to0+}\left(\rho_{st}(x)^{|\beta|}x^{m-|\alpha|-\ell-|\beta|}\right)$
and also, therefore, as $\o{x\to0+}\left(\rho_{st}(x)^{m-|\alpha|-\ell}\right)$,
since $\rho_{st}(x)\leq 2\rho(x)\le 2x$ and $|\beta|>m-|\alpha|-\ell$.
This proves Claim \ref{claim:third}.
\end{proof}

We can now complete the proof of Proposition \ref{prop:cluster}. For each $s=1,\ldots,r$, let
\begin{equation*}
f^s(x,y) := (f^s_1(x,y),\ldots,f^s_D(x,y)),\quad (x,y) \in (0,\ep) \times \IR^{n-1}
\end{equation*}
(see \eqref{eq:fs}). Then $f^s(x,y)$ is an $\IR^D$-valued Nash function on $(0,\ep) \times \IR^{n-1}$, and all
partial derivatives $\p^{(\ell,\al)}f^s(x,y)$, where $\ell\in\IN,\, \al\in\IN^{n-1},\, \ell + |\al| \leq m$, tend to zero as
$(x,y) \to 0$ in the interior of $E_\ep$, by Claim \ref{claim:first}. Therefore, $f^s(x,y)$, $(x,y)\in E_\ep \cap \{x>0\}$,
extends to an $\IR^D$-valued $\cC^m$ semialgebraic function $g^s(x,y)$ on $E_\ep$, which is $m$-flat at $(0,0)$
(because $E_\ep$ is \emph{$1$-regular}; i.e., any two points $p,q \in E_\ep$ can be joined by a curve of 
length $\leq c\,|p-q|$ in $E_\ep$; here $c =1$. Cf. \cite{W1}.)

Let $G^s$ denote the $m$-jet field of $g^s$ on $\overline{\Ga_{\psi_s}}$. Then $G^s$ is an $\IR^D$-valued
semialgebraic $\cC^m$ Whitney field such that $G^s_0 = 0$. Let $G$ denote
the $m$-jet field on $X \cap B(0,\ep)$ given by $G^s$ on each curve $\overline{\Ga_{\psi_s}}$. 
Then $A(p)\cdot G_p = B(p)$, $p\in X\cap B(0,\ep)$, by \eqref{eq:1st}. 

Say $G_p = \sum_{\al\in\IN^n, |\al|\leq m} g_\al(p) T^\al/\al! \in \cP^D$, $p\in X \cap B(0,\ep)$.
It is enough to show that, for all $\al\in\IN^n$, $|\al|\leq m$,
\begin{equation}\label{eq:Wh}
g_\al(p) - \sum_{\substack{\be\in \IN^n \\ |\be| \leq m-|\al|}} g_{\al +\be}(q) \frac{(p-q)^\be}{\be!} = \o{}(|p-q|^{m-|\al|})
\end{equation}
as $p,q \to 0$ in $X\backslash \{0\}$ (cf. \eqref{eq:Whitney}).

Write $p=(x,\psi_s(x))$, $q=(x',\psi_t(x'))$, where $x, x' \in (0,\ep)$. Then the left-hand side of \eqref{eq:Wh}
can be written
\begin{equation*}
\p^\al(g^s - g^t)(p) + \left(\p^\al g^t(p) - \sum_{|\be| \leq m-|\al|} \p^{\al +\be}g^t(q) \frac{(p-q)^\be}{\be!}\right).
\end{equation*}
The second term of this expression is $\o{}(|p-q|^{m-|\al|})$, since $g^t$ is $\cC^m$, while the first term
is $\o{}(\rho_{st}(x)^{m-|\al|})$, by Claim \ref{claim:third}. But
\begin{align*}
\rho_{st}(x) & \leq |\psi_s(x) - \psi_t(x')| + |\psi_t(x') - \psi_t(x)|\\
& \leq |p-q| + c |x-x'| \leq (1+c)|p-q|,
\end{align*}
where $c := \max_{u=1,\ldots,r} \sup_{(0,\ep)}|\psi'_u|$, so the result follows.
\end{proof}

\section{Relative statements and semialgebraic Whitney extension in the plane}\label{sec:plane}
We begin with a relative version of Theorem \ref{thm:main}, and afterwards show how it can be useful, in a proof
of Corollary \ref{cor:mainW2} on the $\cC^m$ semialgebraic Whitney extension problem in $\IR^2$.

\begin{corollary}\label{cor:relmain}
Consider a multiplicative system of equations \eqref{eq:eq}, where $X\subset\IR^n$ is a semialgebraic set and $\dim X =1$.
Assume that \eqref{eq:eq} has a solution $F$, where $F$ is a (vector-valued) $\cC^m$ Whitney field on $X$, and $F$ is
semialgebraic on a subset $Y$ of $X$. Then
\eqref{eq:eq} has a solution $G$, where $G$ is a (vector-valued) $\cC^m$ semialgebraic Whitney field on $X$, and $G|_Y = F|_Y$.
\end{corollary}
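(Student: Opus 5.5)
We deduce the relative statement from the absolute Theorem \ref{thm:main} by adding equations that pin $G$ down on $Y$, while keeping the system multiplicative. The dimension hypothesis is not used except through Theorem \ref{thm:main}. Note first that $Y$ is semialgebraic: it is the domain of the semialgebraic coefficients of $F|_Y$. This is implicit in the hypothesis; otherwise we replace $Y$ by the largest semialgebraic set on which we are told $F$ is semialgebraic.

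The naive approach would add the equations $F_x = F_x^{\mathrm{given}}$ for $x\in Y$. At a point $x\in Y$ these equations have kernel $\{0\}$, which is a $\cP$-submodule, so the enlarged system would still be multiplicative and its data would be semialgebraic. The difficulty is that $F$ is $\cC^m$ Whitney on all of $X$, whereas Theorem \ref{thm:main} only returns some semialgebraic solution $G$. Pointwise equality $G_x=F_x$ for $x\in Y$ is exactly what we want. So we define the new system on $X$ by
\begin{equation*}
\widehat A(x) := \begin{pmatrix} A(x) \\ \mathbf 1_Y(x)\, I_d\end{pmatrix},\qquad
\widehat B(x) := \begin{pmatrix} B(x)\\ \mathbf 1_Y(x)\, F_x\end{pmatrix},
\end{equation*}
where $\mathbf 1_Y$ is the indicator of $Y$ and $I_d$ is the $d\times d$ identity.

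We then check the hypotheses of Theorem \ref{thm:main}:
\begin{enumerate}
\item $\widehat A$ and $\widehat B$ are semialgebraic, since $Y$ is semialgebraic and $F|_Y$ is semialgebraic.
\item $\Ker\widehat A(x)$ equals $\Ker A(x)$ for $x\notin Y$, and equals $\Ker A(x)\cap\{0\}=\{0\}$ for $x\in Y$. In both cases it is a $\cP$-submodule, so the new system is multiplicative.
\item $F$ itself is a $\cC^m$ Whitney field solving the new system.
\end{enumerate}
Theorem \ref{thm:main} then gives a $\cC^m$ semialgebraic Whitney field $G$ on $X$ solving the new system. The added rows force $G_x=F_x$ on $Y$, and the first block gives $A(x)\cdot G_x=B(x)$ on $X$.

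There is no real obstacle in this argument. The only point needing care is that the indicator function makes the coefficients of $\widehat A$ and $\widehat B$ discontinuous. This is harmless, because Theorem \ref{thm:main} requires only that $A$ and $B$ be semialgebraic functions, not continuous ones; the proof of Lemma \ref{lem:FL5.1} already discards finitely many points where the data fail to be Nash. We should also record that the absolute-to-relative implication is purely formal and does not use $\dim X=1$, as the paper indicates.
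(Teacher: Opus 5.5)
Your proposal is correct and is essentially the paper's proof: the paper likewise appends the rows $A_1(x)=I_d$, $B_1(x)=F_x$ on $Y$ (and $0$ on $X\setminus Y$), observes that the stacked system stays multiplicative because its kernel is the intersection of the two kernels, and applies Theorem \ref{thm:main}. Your hedge about replacing $Y$ is unnecessary: by the paper's convention, $F|_Y$ being semialgebraic already forces $Y$ to be semialgebraic.
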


\begin{proof}
Recall that we identify the vector-valued $m$-jet $F_x\in \cP^D$, $x\in X$, with the vector of coefficients 
$(f_{j,\al}(x))_{j=1,\ldots,D,\, |\al|\leq m}$ (see \S\ref{subsec:main}). Let $d := D\binom{m+n}{m}$.

Trivially, $F$ is a solution of a system of equations
\begin{equation}\label{eq:Y}
A_1(x)\cdot F_x = B_1(x),\quad x\in X,
\end{equation}
where $A_1(x)$ is a $d \times d$ matrix of semialgebraic 
functions on $X$, and $B_1(x)$ is an $\IR^d$-valued semialgebraic function on $X$, defined as follows:
\begin{equation*}
A_1(x) := \begin{cases} I_d, &x\in Y,\\
                                       0,   &x\in X\backslash Y,
                \end{cases}   
\qquad\quad B_1(x) :=  \begin{cases} F_x, &x\in Y,\\
                                       0,   &x\in X\backslash Y,
                \end{cases}                                                    
\end{equation*}
where $I_d$ is the $d\times d$ identity matrix. The system of equations \eqref{eq:Y} is clearly multiplicative.

Therefore, 
$F$ is a solution of a system of equations
\begin{equation}\label{eq:Ybig}
\tA(x)\cdot F_x = \tB(x),\quad x\in X,
\end{equation}
where
\begin{equation*}
\tA(x) := \left(\begin{matrix} A(x)\\
                                           A_1(x)
                    \end{matrix}\right),
\qquad\quad \tB(x) :=  \left(\begin{matrix} B(x)\\
                                           B_1(x)
                    \end{matrix}\right).
\end{equation*}                    
Since \eqref{eq:eq} and \eqref{eq:Y} are both multiplicative, so is \eqref{eq:Ybig}.                     
                                          
By Theorem \ref{thm:main}, \eqref{eq:Ybig} has a semialgebraic $\cC^m$ Whitney field solution $G$ on $X$;
in particular, $G$ is a solution of \eqref{eq:eq} and $G_x = F_x$, $x\in Y$.
\end{proof}

\begin{remark}\label{rem:mainM} 
By essentially the same argument, Theorem \ref{thm:mainjet} implies a relative version as a corollary.

To obtain a relative version of Corollary \ref{cor:mainM}, consider a solution $f \in \cC^m(\IR^n, \IR^D)$ of a
system of equations \eqref{eq:mainM} and assume that $f$ is semialgebraic on $Y\subset X$. Then we can
use the argument of the proof of Corollary \ref{cor:relmain} with $A_1(x) := I_D$, $B_1(x)=f(x)$, if $x\in Y$,
and $A_1(x) := 0$, $B_1(x)=0$, if $x\in X\backslash Y$, to obtain a $\cC^m$ semialgebraic solution $g$
of  \eqref{eq:mainM}, with  $g|_Y = f|_Y$.
\end{remark}

\begin{proof}[Proof of Corollary \ref{cor:mainW2}]
As in Remarks \ref{rem:closure}, we can assume that $X\subset \IR^2$ is closed. Let $U$ denote the interior of $X$,
$Z:= X\backslash U$ and $Y:= \bdry U$. Then $Z$ is a closed semialgebraic set of dimension one, and $Y$ is a closed
semialgebraic subset of $Z$.

Let $F$ denote the Taylor field of $f$ on $X$. Then $F$ is a $\cC^m$ Whitney field, and $F|_{\overline{U}}$, 
$H := F|_Y$ are semialgebraic. Let $d := \binom{m+2}{m}$.

Now, $F|_Z$ is a solution of a system of equations on $Z$,
\begin{equation}\label{eq:W2}
A(x)\cdot F_x = B(x),\quad x\in Z,
\end{equation}
where $B(x)$ is an $\IR^d$-valued semialgebraic function on $Z$, and $A(x)$ is a $d \times d$ matrix of semialgebraic 
functions on $Z$, say $B(x) = (b_\al(x))_{\al\in\IN^2,\,|\al|\leq m}$, and
$A(x) := (a_{\al\be}(x))_{|\al|,|\be|\leq m}$,
defined as follows:
\smallskip
\begin{equation*}
\begin{aligned}
&B(x) = H_x, &&\qquad \text{if $x\in Y$,}\\
&b_0(x) = g(x) \text{ and $b_\al(x)=0$, $\al\neq 0$}, &&\qquad \text{if $x\in Z\backslash Y$,}
\end{aligned}
\end{equation*}
and
\begin{equation*}
\begin{aligned}
&A(x) := I_d,   
&&\qquad \text{if $x\in Y$,}\\
&a_{00}(x) = 1 \text{ and $a_{\al\be}(x) = 0$, $(\al,\be) \neq (0,0)$}, &&\qquad \text{if $x\in Z\backslash Y$.}
\end{aligned}
\end{equation*}

\medskip
The system of equations \eqref{eq:W2} is multiplicative since $\Ker A(x) = 0$, if $x\in Y$, and
$\Ker A(x) = \{P\in\cP: P(0)=0\}$, if $x\in Z\backslash Y$.

By Corollary \ref{cor:relmain}, \eqref{eq:W2} has a semialgebraic $\cC^m$ Whitney field solution $G$ on $Z$,
such that $G = H$ on $Y$. Then there is a semialgebraic $\cC^m$ Whitney field $\tG$ on $X$, such that
$\tG = G$ on $Z$, and $\tG = F$ on $\overline{U}$ ($Z$ and $\overline{U}$ are trivially $1$-regularly situated). 
By \cite{KP}, there is a semialgebraic function $\tg\in \cC^m(\IR^2)$
with Taylor field $\tG$ on $X$; $\tg$ is an extension of the given semialgebraic function $g:X\to \IR$ in 
Corollary \ref{cor:mainW2}.
\end{proof}

Alternatively, Corollary \ref{cor:mainW2} can be deduced in essentially the same way from the relative version of Theorem \ref{thm:mainjet}.

\section{The bundle viewpoint}\label{sec:bundle}
Recall the identification $\cP^D\cong \IR^d$ of \S\ref{subsec:notn}. We give $\cP^D$ the Euclidean structure induced by
that of $\IR^d$.

\begin{proposition}\label{prop:bundle}
Let $\cH=(H(x))_{x\in X}$ be a semialgebraic $\cC^m(\IR^n,\IR^D)$-bundle on a subset $X\subset\IR^n$.
Then there exist a $d\times d$ matrix $A(x)$ of semialgebraic functions on $X$, and a vector-valued semialgebraic function
$B:X\to\IR^d$, such that
\begin{equation*}
H(x)=\left\{P\in\cP^D: A(x)\cdot P=B(x)\right\},\quad  x\in X;
\end{equation*}
i.e., $\cH$ is the bundle of solutions of a system of equations \eqref{eq:eq} with $k=d$.
Moreover, this system of equations is multiplicative, and admits a solution $P$ for every $x\in X$.
\end{proposition}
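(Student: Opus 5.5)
We write $H(x) = \s(x) + M(x)$, where $M(x)\subset\cP^D$ is a $\cP$-submodule, so that $M(x) = H(x) - H(x)$ is determined by $H(x)$, and we realize $H(x)$ as the solution set of a linear system whose kernel is $M(x)$. The natural choice is orthogonal projection onto $M(x)^\perp$. We let $A(x)$ be the matrix, in the identification $\cP^D\cong\IR^d$, of the orthogonal projection $\pi_x$ of $\cP^D$ onto the orthogonal complement $M(x)^\perp$. We set $B(x) := \pi_x(P)$ for any $P\in H(x)$. Then $\Ker A(x) = M(x)$ is a submodule, so the system is multiplicative. Also $\{P : A(x)P = B(x)\} = P_0 + M(x) = H(x)$ for any $P_0\in H(x)$, and $H(x)\neq\emptyset$ gives a solution at every $x$. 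Moreover $B(x)$ is well defined, since two elements of $H(x)$ differ by an element of $M(x)=\Ker\pi_x$. The main content is therefore semialgebraicity of $A$ and $B$.

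\emph{Semialgebraicity of $M$.} The set $\{(x,Q): Q\in M(x)\} = \{(x,P-P'): (x,P),(x,P')\in \cH\}$ is semialgebraic by Tarski--Seidenberg, since it is the image of the fibre product of the graph of $\cH$ with itself under a polynomial map.

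\emph{Semialgebraicity of $A$.} The graph of $x\mapsto A(x)$ is defined by the first-order formula
\[
A = A^T,\quad A^2 = A,\quad \forall Q\,\bigl(Q\in M(x) \Leftrightarrow AQ = 0\bigr).
\]
A symmetric idempotent matrix is the orthogonal projection onto its image, and its kernel is the orthogonal complement of that image. So the formula characterizes exactly the orthogonal projection with kernel $M(x)$, that is, $\pi_x$. The graph is therefore semialgebraic by Tarski--Seidenberg, and the entries of $A$ are semialgebraic functions on $X$.

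\emph{Semialgebraicity of $B$.} Similarly, the graph of $B$ is $\{(x,b): \exists P\,((x,P)\in\cH \text{ and } b = A(x)P)\}$, which is semialgebraic.

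The only point requiring care is that $A$ and $B$ are canonical, i.e.\ uniquely determined by $x$. This is what lets us avoid semialgebraic selection and produce genuine functions directly, and it is secured by using the orthogonal projection. Nothing about $\dim X$ is used, and $k=d$ as required.
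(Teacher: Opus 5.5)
Your proof is correct and is essentially the paper's. Both take $A(x)$ to be the orthogonal projection whose kernel is the submodule $H(x)-H(x)$, and both get semialgebraicity from a first-order description of its graph. The paper describes the projection $\Pi(x)$ onto $I(x)$ and sets $A=I_d-\Pi$, whereas you characterize $A$ directly as the symmetric idempotent with that kernel.

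The only real difference is how $B$ is obtained. The paper picks a semialgebraic section $\sigma(x)\in H(x)$ by semialgebraic selection and sets $B=A\sigma$. You instead observe that $A(x)P$ does not depend on the choice of $P\in H(x)$, so the graph of $B$ is definable directly. This avoids the selection step, though that step is harmless either way since definable choice is available.
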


\begin{proof}
By definition, given $x\in X$, the fibre $H(x)$ is a coset of a $\cP$-submodule $I(x)$ of $\cP^D$; thus 
\begin{equation*}
I(x) = H(x)-H(x) := \left\{P-Q: P,Q\in H(x)\right\},
\end{equation*}
and $H(x)=P+I(x)$, for any $P \in H(x)$.

Let $\Pi(x)$ denote the matrix of the orthogonal projection of $\cP^D$ onto $I(x)$. The graph $\Ga_\Pi$ of $\Pi$ is a
subset of $X\times M_d(\IR)$, where $M_d(\IR)$ denotes the space of $d\times d$ real matrices; $\Ga_\Pi$ can be
defined by the following first-order formula involving only semialgebraic objects:
\begin{equation*}
\left\{(x,M)\in X\times M_d(\IR): \forall P\in\cP^D,\,MP\in I(x)\text{ and }\forall Q\in I(x),\,(P-MP)\cdot Q=0\right\}.
\end{equation*}
Therefore, $\Pi$ is semialgebraic, and so is the function $A(x) := I_d-\Pi(x)$.

By semialgebraic selection (or definable choice), there is a semialgebraic mapping $\sigma:X\to\cP^D$ such that $\sigma(x)\in H(x)$
for all $x\in X$. Set $B(x) := A(x)\cdot\sigma(x)$. Then $B$ is semialgebraic. Since $H(x)=\sigma(x)+I(x)=\sigma(x)+\Ker A(x)$, we get
\begin{equation*}
H(x)=\left\{P\in\cP^D: A(x)\cdot P=B(x)\right\}.
\end{equation*}
The system of equations constructed is multiplicative, since $\Ker A(x)=I(x)$ is a $\cP$-submodule of $\cP^D$,
and admits a solution $\s(x)$, for every $x\in X$.
\end{proof}

\section{O-minimal generalization}\label{sec:omin}
The  purpose of this final section is to indicate how to generalize our results from semialgebraic functions
to functions that are definable in an o-minimal structure on $\IR$. We show, in particular, how to replace the
use of Puiseux's theorem (i.e., the use of \eqref{eq:puis}) in the proof of the main theorem. 

Fix an o-minimal expansion of $\IR$. Suppose $f: A\to \IR$ is definable on a subset $A$ of $\IR$. Then there is a finite
subset $S$ of $A$ such that $A\backslash S$ is a finite union of open intervals, on each of which $f$ is (strictly)
increasing, decreasing or constant (by the \emph{monotonicity theorem}, \cite[Chapt.\,3]{vdD}).

A definable set need not have an analytic or even $\cC^\infty$ cell decomposition, but, for every positive integer
$k$, it has a $\cC^k$ cell decomposition. ($\cC^{2m}$ suffices for Lemma \ref{lem:FL5.1} and Theorem \ref{thm:main}.)
In particular, for given $k$, $f$ is $\cC^k$ outside a finite subset of $A$,
and the $k$th derivative $f^{(k)}$ is definable \cite[Chapt.\,7]{vdD}.

\begin{lemma}\label{lem:def}
Let  $g,\,u:(0,\varepsilon)\to\IR$ be definable $\cC^k$ functions.
\begin{enumerate}
\item If $\lim_{x\to0+}g(x)\in\IR$ then $\lim_{x\to0+}xg'(x)=0$.

\smallskip
\item If $\lambda\in\IZ$ and $g(x)=\o{x\to0+}(x^\lambda)$ then $g^{(j)}(x)=\o{x\to0+}(x^{\lambda-j})$, $j=1,\ldots,k$.
\smallskip
\item Assume that $u>0$ and $\lim_{x\to 0+}u(x)=0$. If $\lambda\in\IZ$ and 
$g(x)=\o{x\to0+}(u(x)^\lambda)$, then $g'(x)=\o{x\to0+}(u(x)^{\lambda-1}u'(x))$. 
If, moreover, $u(x)\leq Cx$ on $(0,\varepsilon)$, where $C>0$, then $g^{(j)}(x)=\o{x\to0+}(u(x)^{\lambda-j})$, $j=1,\ldots,k$.
\end{enumerate}
The statements (2) and (3) also hold with $\o{}$ replaced by $\O{}$.
\end{lemma}

\begin{proof}
(1) Since $x\mapsto xg'(x)$ is definable, $L := \lim_{x\to 0+}xg'(x)\in\IR\cup\{\pm\infty\}$ is well defined.
Suppose $L\neq0$; say $L>0$. Then $g'(x)\geq c/x$ ($c>0$) for $x$ small enough: thus $g(x)\leq c\log x + \text{const}$,
in contradiction to $\lim_{x\to0+}g(x)\in\IR$.

\medskip\noindent
For (2) and (3), by induction, it is enough to consider the case $j=1$. 

\medskip\noindent
(2) This follows from (1) applied to $g(x)/x^\lambda$.

\medskip\noindent
(3) We can assume that $u$ is increasing.
Therefore, there exists $\delta\in(0,\varepsilon)$ such that $u:(0,\delta)\to(0,u(\delta))$ is a definable $\cC^1$-diffeomorphism.
Then $G := g\circ u^{-1}$ is a definable $\cC^1$ function on $(0,u(\delta))$, and 
\begin{equation*}
G(t)=g\left(u^{-1}(t)\right)=\o{t\to0+}\left(t^\lambda\right).
\end{equation*}
By (2), $G'(t)=\o{t\to0+}(t^{\lambda-1})$, so that
\begin{equation*}
g'(x)=G'\left(u(x)\right)u'(x)=\o{x\to0+}\left(u(x)^{\lambda-1}u'(x)\right). 
\end{equation*}
If, moreover, $u\le Cx$ then $\lim_{x\to 0+}u'(x)=\lim_{x\to0+} u(x)/x \leq C$, by l'H\^opital's rule (which holds
by the monotonicity theorem), and the result follows.
\end{proof}

\begin{remark}\label{rem:def}
Item (2) of the lemma (and its $\O{}$ version) replaces the use of \eqref{eq:puis} in the proof of Claim \ref{claim:first};
it gives
\begin{equation*}
\p_x^q f^s_{j,\be}(x)=\o{x\to0+}\left(x^{m-|\be|-q}\right),\qquad \p_x^p\psi_s(x)=\O{x\to0+}\left(x^{1-p}\right).
\end{equation*}

The use of \eqref{eq:puis} in the proof of \eqref{eq:(1)} can be replaced by the second assertion of item (3), applied to $u=\rho_{st}$;
this gives
\begin{equation*}
W'_{\ell,\alpha}(x)=\o{x\to0+}\left(\rho_{st}(x)^{m-|\alpha|-\ell-1}\right).
\end{equation*}
We note that the first equality in \eqref{eq:(1)}, which follows from \eqref{eq:puis} in the semialgebraic case (or more generally in any polynomially bounded o-minimal expansion of $\IR$), is nevertheless not true in $\IR_{\exp}$. Indeed, consider
$\psi_s := 0$ and $\psi_t(x) := e^{-1/x}$, so that $\rho_{st}(x)=e^{-1/x}$. If $g(x):= \sqrt{x}\,\rho_{st}(x)^N$,
where $N\geq 1$, then $g(x) =\o{x\to0+}\left(\rho_{st}(x)^N\right)$, and $g'(x)=\o{x\to0+}\left(\rho_{st}(x)^{N-1}\right)$,
by Lemma \ref{lem:def}\,(3), but
\begin{equation*}
g'(x)=\left(\frac{1}{2\sqrt{x}}+ \frac{N}{x^{3/2}}\right)e^{-N/x}
\end{equation*}
is not $\o{x\to0+}\left(\rho_{st}(x)^Nx^{-1}\right)$.
\end{remark}

\bibliographystyle{amsplain}

\end{document}